\documentclass[a4paper,11pt,reqno]{amsart}
\usepackage{amsmath,amsthm,amssymb}
\usepackage{relsize}
\usepackage[noadjust]{cite}
\usepackage{color}
\usepackage{graphicx}
\usepackage[dvipsnames]{xcolor}
\usepackage{multirow}
\usepackage{booktabs}
\usepackage{verbatim} 

\usepackage{enumitem}
\usepackage{tikz}
\usetikzlibrary{decorations.pathreplacing}

\usepackage[colorlinks, urlcolor=blue, linkcolor=Purple, citecolor=red]{hyperref}
\usepackage[capitalize]{cleveref}

\usepackage{setspace}
\usepackage{enumitem}
\setitemize{itemsep=-1pt}
\setenumerate{itemsep=-1pt}

\usepackage[margin=2.4cm]{geometry}

\usepackage{stmaryrd}
\usepackage{longtable}

\usepackage{array}
\newcolumntype{x}[1]{>{\centering\arraybackslash\hspace{0pt}}p{#1}}

\theoremstyle{definition}
\newtheorem{theorem}{Theorem}[section]
\newtheorem{definition}[theorem]{{{Definition}}}
\newtheorem{example}[theorem]{{{Example}}}

\newtheorem{remark}[theorem]{{{Remark}}}

\newtheorem{corollary}[theorem]{{{Corollary}}}
\newtheorem{proposition}[theorem]{{{Proposition}}}
\newtheorem{lemma}[theorem]{{{Lemma}}}

\newcommand{\numberset}{\mathbb}

\newcommand{\F}{\numberset{F}}
\newcommand{\Fq}{\F_q}

\newcommand{\mP}{\mathcal{P}}

\newcommand\qbin[3]{\left[\begin{matrix} #1 \\ #2 \end{matrix} \right]_{#3}}

\newcommand{\mS}{\mathcal{S}}

\newcommand{\dd}{\mathrm{d}}
\newcommand{\dS}{\dd_{\mathrm{S}}}

\DeclareMathOperator{\Gr}{Gr}

\newcommand{\bF}{\mathbb{F}}

\newcommand{\cA}{\mathcal{A}}
\newcommand{\cB}{\mathcal{B}}

\newcommand{\cK}{\mathcal{K}}
\newcommand{\cP}{\mathcal{P}}

\newcommand{\cS}{\mathcal{S}}

\newcommand{\cU}{\mathcal{U}}
\newcommand{\cW}{\mathcal{W}}

\title{Combinatorial constructions of Schubert subspace codes}

\usepackage[foot]{amsaddr}
\author{}
\author{Gianira N. Alfarano$^1$}
\author{Alessandro Neri$^2$}
\author{Beatrice Toesca$^3$}
\address{$^1$Universit\'e de Rennes, IRMAR, Campus de Beaulieu, F-35042 Rennes Cedex, France.}
\address{$^2$Department of Mathematics and Applications “R. Caccioppoli”, University of Naples Federico II, Via Cintia,
Monte S. Angelo, 80126 Napoli, Italy.}
\address{$^3$Institute of Mathematics, University of Zurich, Switzerland.}

\email{gianira-nicoletta.alfarano@univ-rennes.fr}
\email{alessandro.neri@unina.it}
\email{beatrice.toesca@math.uzh.ch}

\begin{document}
	
\begin{abstract}
We study Schubert subspace codes, which are constant-dimension subspace codes with prescribed intersection conditions with a fixed subspace. Our goal is to construct codes of maximum possible size in the extremal distance cases where a
natural counting upper bound applies. We give two families of constructions. The first one uses a direct-sum decomposition of the ambient space, together with partial spreads and colorings of powers of $q$-Johnson graphs. For this construction, we also prove necessary conditions, which
show how chromatic and clique obstructions arise. The second family is obtained by field
reduction from evasive and scattered subspaces over extension fields. This gives codes whose size can be computed exactly in the scattered case and recovers the only previously known construction as a special case.
\end{abstract}
	
	\maketitle

\noindent\textbf{Keywords:} Schubert subspace codes; constant-dimension codes; partial spreads; $q$-Johnson graphs; scattered subspaces; field reduction.

\noindent\textbf{MSC (2020):} 94B05, 94B27, 05C15, 51E20

\section{Introduction}

Subspace codes were introduced in the context of random network coding as a natural error-correcting framework for noncoherent communication over networks. In this model, intermediate nodes are allowed to perform linear combinations of incoming messages before forwarding them. It is natural in the area of network coding to consider the elements of the Grassmannian $\Gr_q(k,n)$  as codewords of a so-called \emph{subspace code}.
The foundational work of K\"otter and Kschischang initiated the systematic study of such codes
and introduced several basic bounds and constructions \cite{koetter2008coding}. Since then, subspace codes, and in particular \emph{constant-dimension codes}, have been studied extensively; see, for instance, \cite{horlemann2018constructions, kurz2021constructions} and references therein. Constant-dimension codes are subsets of the Grassmannian $\Gr_q(k,n)$, and the subspace distance between two codewords $U,V\in \Gr_q(k,n)$ is given by $d_S(U,V)=2(k-\dim(U\cap V))$.
Thus, constructing large constant-dimension codes with a prescribed minimum distance is equivalent to constructing large families of $k$-dimensional subspaces with controlled pairwise intersections.

Several important constructions of constant-dimension codes are based on rank-metric codes.
The lifting of rank-metric codes, and in particular of maximum rank-distance codes, produces large families of subspaces with good distance properties \cite{koetter2008coding}.
This point of view also connects the theory of subspace codes with the geometry of Grassmannians, where subspaces are represented by row spaces of matrices in reduced row echelon form.
In this direction, Ferrers diagram rank-metric codes and multilevel constructions provide flexible methods for building large codes; see \cite{etzion2009error}.
These constructions show that imposing geometric or combinatorial restrictions on the support of codewords can still lead to large and structured families of subspaces.

In this paper, we study \emph{Schubert subspace codes}, which were recently introduced in  \cite{alfarano2024schubert}.
These are subspace codes with an additional geometric constraint: the codewords are required to lie in a fixed Schubert variety of the Grassmannian.
Schubert varieties are among the most classical subvarieties of Grassmannians and they are defined by imposing lower bounds on the dimensions of the intersections with the subspaces in a fixed flag.
Classical references for Schubert varieties over arbitrary fields include \cite{hodge1947methods, fulton1997young}.
In the coding-theoretic setting, Schubert varieties over finite fields were first considered in \cite{ghorpade2000higher}, where the authors introduced the concept of \textit{Schubert codes} as algebraic-geometry codes obtained by evaluating polynomials on points lying on a Schubert variety.
The \emph{Schubert subspace codes} we study in this paper, following \cite{alfarano2024schubert}, are a different family of codes, which are a special class of subspace codes instead of classical linear codes.
In this setting, the Schubert condition amounts to requiring every codeword to satisfy prescribed incidence conditions with fixed subspaces of the ambient space, transforming the problem into an incidence geometry question.
This gives a natural way to study constant-dimension codes under geometric constraints.

In \cite{alfarano2024schubert}, the
authors focused in particular on Schubert varieties defined by an incidence condition with a fixed subspace and constructed optimal examples in the extremal distance case using scattered subspaces.
More precisely, if $U\subseteq \Fq^n$ is a fixed subspace, one considers families of $k$-dimensional subspaces whose intersection with $U$ has dimension at least $1$, while any two distinct codewords intersect trivially.

We continue this line of investigation in this paper, but with a focus on  \emph{$(\ell,t)$-intersecting sets}.
These are precisely Schubert subspace codes defined by the Schubert variety of spaces intersecting a fixed $U$ in dimension at least $\ell$, and with minimum subspace distance at least $2k-2t$.
A basic counting argument gives a natural upper bound for these codes whenever $t\leq \ell-1$, which was already proved in \cite{alfarano2024schubert}.
The main problem addressed in this paper is to determine when this bound is attained and to construct optimal codes.

We focus mainly on the extremal case $t=\ell-1$, which is the largest value of $t$ for which the bound still applies.
Our first construction uses a direct-sum decomposition $\Fq^n=U\oplus V$.
It combines partial spreads in the complement $V$ with colorings of powers of $q$-Johnson graphs.
This gives sufficient conditions for the existence of optimal Schubert subspace codes.
We also prove necessary conditions for constructions of this type, showing that independent sets and cliques in the graph naturally provide obstructions.

Our second construction is based on field reduction from extension fields.
Starting from suitable evasive or $h$-scattered $\Fq$-subspaces of $\F_{q^k}^r$, we obtain Schubert subspace codes whose parameters can be controlled explicitly.
In the scattered case, the size of the construction is computed exactly, and for $h=1$ we recover the construction of \cite{alfarano2024schubert}.

The paper is organized as follows.
Section~\ref{sec:prel} recalls the necessary background.
Section~\ref{sec:UV_construction} develops the direct-sum construction and studies both sufficient and necessary conditions for optimality.
Section~\ref{sec:scattered_generalized} presents the field reduction construction using evasive and scattered subspaces.
Section~\ref{sec:conclusions} draws concluding remarks and open problems.

\medskip

\section*{Acknowledgments}
This research has been partially supported by the Italian National Group for Algebraic and Geometric Structures and their Applications (GNSAGA - INdAM), by Università Italo Francese (UIF/UFI) via PHC Galileo 2026 - G26-260/54322VM and by the University of Naples Federico II, which provided funding through the ``Naples-Rennes Agreement on Scientific Cooperation''.
G. N. Alfarano is supported by the Agence Nationale de la Recherche through grant number ANR-24-CPJ1-0075-01.
A. Neri is supported by the INdAM - GNSAGA Project CUP E53C24001950001  ``Noncommutative polynomials in coding theory''.
B. Toesca is supported by the Swiss National Foundation through grant no. 212865.
The authors are thankful to Joachim Rosenthal and Vladislav Taranchuk for fruitful discussions.

\medskip

	\section{Preliminaries}\label{sec:prel}

   In this section, we introduce the necessary background for the rest of the paper. Throughout the paper, we will use the following notation.
    
\medskip

   \noindent{\textbf{Notation.}} Let $q$ be a prime power and $\F_q$ be the finite field of order $q$. For an integer $m\in\mathbb{N}$, we denote by $[m]:=\{1,\ldots,m\}$. Let $n,k,u,\ell,t$ be integers with $k,u\in[n]$, $0\leq\ell \leq \min\{k,u\}$ and $0\leq t\leq k-1$. We denote by $\Gr_q(k,n)$ the Grassmannian of $k$-dimensional subspaces of $\F_q^n$.

\medskip 
\subsection{Schubert subspace codes}

In \cite{alfarano2024schubert}, Schubert subspace codes have been considered as restrictions of subspace codes to Schubert varieties.

We first recall the definition of a Schubert variety in the Grassmannian $\Gr_q(k,n)$. Let $F=(V_1,V_2,\dots,V_n)$
be a \textbf{flag} of subspaces of $\Fq^n$, that is, a nested sequence
$$
\{0\}\subset V_1 \subset V_2 \subset \dots \subset V_n=\Fq^n
$$
such that $\dim(V_j)=j$ for every $j=1,2,\dots,n$. Let
$d=(d_1,\dots,d_k)$ be an ordered index set satisfying
$1\leq d_1<\dots<d_k\leq n$.
We say that a subspace $W\in\Gr_q(k,n)$ satisfies the \textbf{Schubert condition} $d$ with respect to the flag $F$ if
$\dim(W\cap V_{d_i})\geq i$ for every $i\in[k]$.

\begin{definition}
Let $F=(V_1,V_2,\dots,V_n)$ be a flag of subspaces of $\Fq^n$ and let $d=(d_1,\dots,d_k)$ be a Schubert condition. The \textbf{Schubert variety} associated with $F$ and $d$ is
$$
\Omega_d := \{W\in\Gr_q(k,n) \mid \dim(W\cap V_{d_i})\geq i, \; \forall i\in[k] \}.
$$
\end{definition}

We also recall that the Grassmannian $\Gr_q(k,n)$ is endowed with the \textbf{subspace distance}, defined for every $U,V\in\Gr_q(k,n)$ as
\begin{align*}
\dS(U,V)
&= \dim_{\Fq}(U+V)-\dim_{\Fq}(U\cap V)  \\
&= 2(k-\dim_{\Fq}(U\cap V)).
\end{align*}

A \textbf{constant-dimension subspace code} is a family of subspaces $\cS\subseteq \Gr_q(k,n)$. Its minimum subspace distance is defined as
$$
\dd_{\mathrm{S}}(\cS)
=
\min\{\dd_{\mathrm{S}}(U,V) \mid U,V\in \cS,\ U\ne V\}.
$$
A central question in the theory of constant-dimension subspace codes is the construction of codes in $\Gr_q(k,n)$ with the largest possible size for the given minimum subspace distance $d$. The largest size of such a code is denoted by $$A_q(n,k,d).$$ 
For the known values and bounds on $A_q(n,k,d)$, we refer the interested reader to \cite{heinlein2016tables} and references therein.

The largest possible minimum distance of a constant-dimension subspace code in $\Gr_q(k,n)$ is~$2k$.
This value is attained precisely when any two distinct codewords intersect trivially.
Such codes can exist when $n\geq 2k$ and geometrically define a \textbf{partial spread}.
If, moreover, for every vector $v\in\Fq^n$ there exists a codeword $U\in\cS$ containing it, the codewords form a \textbf{$k$-spread}. As shown in the seminal paper of Segre \cite{segre1964teoria}, a $k$-spread of $\bF_q^n$ exists if and only if $k$ divides $n$.
For this reason, these codes are called \textbf{spread codes} in the coding theory literature. 
Questions such as efficient decoding of this class of codes have been studied; see~\cite{go12,ma08p}.

In this work, we are interested in subspace codes of large size that live in a specific Schubert variety of $\Gr_q(k,n)$.
This point of view was introduced in \cite{alfarano2024schubert}, where \emph{Schubert subspace codes} were defined as constant-dimension subspace codes whose codewords are restricted to a fixed Schubert variety.

\begin{definition} \label{def:Schubert_code}
    Let $\Omega_d$ be a Schubert variety in the Grassmannian $\Gr_q(k,n)$. A family $\cS\subseteq\Omega_d$ equipped with the subspace distance is called a \textbf{Schubert subspace code} with respect to $\Omega_d$.
\end{definition}

We will focus on a special class of Schubert varieties. Let $U$ be a $u$-dimensional subspace of $\Fq^{n}$. Define
$$
\Omega_{U,\ell}:=\{W\in\Gr_q(k,n)\mid \dim(W\cap U)\geq \ell\}.
$$
For $\ell=1$, we simply write $\Omega_{U,1}=\Omega_U$.

The set $\Omega_{U,\ell}$ is a Schubert variety. Indeed, after fixing a flag
$$
\{0\}\subset V_1\subset \cdots \subset V_{n}=\Fq^{n}
$$
such that $V_u=U$, the condition $\dim(W\cap U)\geq \ell$ is one of the Schubert conditions defining a Schubert variety. In this case, $\Omega_{U,\ell}=\Omega_d$, where $d = (u -\ell+ 1,\ldots, u, n - k + \ell+1,\ldots, n)$.

\begin{definition} \label{def:lt-intersecting_set}
 We say that a Schubert subspace code $\mS\subseteq\Omega_{U,\ell}$ is an \textbf{$(\ell,t)$-intersecting set with respect to~$U$} if for every $S_i,S_j\in\mS$ such that $S_i\neq S_j$, it holds that
    $$
    \dim(S_i\cap S_j)\leq t.
    $$
\end{definition}

In words, we say that an $(\ell,t)$-intersecting set with respect to $U$ is a family of $k$-dimensional subspaces of $\Fq^{n}$ such that every element intersects $U$ in dimension at least $\ell$, while any two distinct elements intersect each other in dimension at most $t$.

Since the subspace distance on $\Gr_q(k,n)$ is given by $\dS(S_i,S_j)=2k-2\dim(S_i\cap S_j)$,
an $(\ell,t)$-intersecting set $\mS\subseteq\Omega_{U,\ell}$ is a Schubert subspace code of minimum distance at least $2k-2t$.
Conversely, a Schubert subspace code $\mS\subseteq\Omega_{U,\ell}$ with minimum distance at least $2k-2t$ is an $(\ell,t)$-intersecting set with respect to $U$.

\begin{remark}
    In \cite[Definition 3.2]{alfarano2024schubert}, a more restricted notion was considered. In particular, it was assumed that $k\mid n$ and that $u\leq \frac{n}{2}$. These assumptions were needed for the construction proposed there, which makes use of scattered subspaces. In this work, we aim to generalize the concept, so we separate the definition of an $(\ell,t)$-intersecting set from the extra assumptions that may be required for specific constructions.
\end{remark}

We denote by $m_q(n,k,u,\ell,t)$ the largest size of an $(\ell,t)$-intersecting set with respect to $U$. Notice that in \cite{alfarano2024schubert} the first two parameters inside $m_q$ were $k$ and $r=\frac{n}{k}$, but here we use this more general notation since we no longer require $n$ to be a multiple of $k$.
Our goal is to estimate the quantity $m_q(n,k,u,\ell,t)$ and to construct codes that achieve such maximum cardinality.

For the case $t\leq\ell-1$, we give the following upper bound on $m_q(n,k,u,\ell,t)$, which generalizes the one obtained in \cite[Proposition 4.4]{alfarano2024schubert} for the case $t=\ell-1$.
\begin{proposition}\label{prop:upper_bound}
 If $t\le \ell-1$, then
 $$m_q(n,k,u,\ell,t)\le A_q(u,\ell,2(\ell-t)).$$
 In particular, if $t=\ell-1$, then
 $$m_q(n,k,u, \ell,\ell-1)\le \qbin{u}{\ell}{q}.$$
\end{proposition}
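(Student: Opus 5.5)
The plan is to build an injective map from an $(\ell,t)$-intersecting set $\mS$ into a constant-dimension code in $\Gr_q(\ell,u)$ with minimum distance at least $2(\ell-t)$. Identifying $U$ with $\Fq^u$ via a fixed isomorphism, this gives $|\mS|\le A_q(u,\ell,2(\ell-t))$ at once.

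For each $S\in\mS$ we have $\dim(S\cap U)\ge \ell$, so we choose an $\ell$-dimensional subspace $W_S\subseteq S\cap U$; any choice works. This gives a map $S\mapsto W_S\in\Gr_q(\ell,U)$.

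Next we check that this map is injective and that its image has the required distance. Take $S\neq S'$ in $\mS$. Then
$$W_S\cap W_{S'}\subseteq S\cap S',$$
so $\dim(W_S\cap W_{S'})\le t$. Since $t\le \ell-1<\ell=\dim W_S$, we get $W_S\neq W_{S'}$, which is injectivity. Moreover,
$$\dS(W_S,W_{S'})=2(\ell-\dim(W_S\cap W_{S'}))\ge 2(\ell-t).$$
Hence $\{W_S\}$ is a code in $\Gr_q(\ell,u)$ of size $|\mS|$ and minimum distance at least $2(\ell-t)$, and the first inequality follows.

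For the special case $t=\ell-1$, the required distance is $2$, which every code with distinct codewords satisfies. So $A_q(u,\ell,2)$ equals the size of the whole Grassmannian, $\qbin{u}{\ell}{q}$.

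There is no real obstacle here. The only point needing care is that the hypothesis $t\le\ell-1$ is exactly what makes the map injective.
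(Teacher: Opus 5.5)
Your proof is correct and follows essentially the same route as the paper. For each codeword you pick an $\ell$-dimensional subspace of $S\cap U$ and note that pairwise intersections lie in $S\cap S'$, so they have dimension at most $t<\ell$. These subspaces therefore form a code in $\Gr_q(\ell,u)$ of size $|\mS|$ and minimum distance at least $2(\ell-t)$.

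The only difference is that the paper first shows $S\mapsto S\cap U$ is injective, whereas you prove directly that $S\mapsto W_S$ is injective. That direct statement is the one the counting actually needs.
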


\begin{proof}
 Let us consider an $(\ell,t)$ -intersecting set $\mS\subseteq \Omega_{U,\ell}$. Define the map 
 $$ \begin{array}{rcc}\Phi:\mS & \longrightarrow & \bigcup\limits_{i=\ell}^k\Gr_q(i,u)\\
 S & \longmapsto & S\cap U. \end{array}$$
 Since $t\leq\ell-1$, we show that the map $\Phi$ is injective.
 Clearly, since $\cS$ is an $(\ell,t)$-intersecting set, for every $S\in\mS$ we have
 $$\dim(\Phi(S))\ge \ell.$$
 Moreover, $\Phi(\mS)$ is a set of subspaces such that, for every two distinct $S_1,S_2\in \mS$, \begin{align*}
\dim\big(\Phi(S_1)\cap\Phi(S_2)\big)&=\dim(S_1\cap S_2\cap U)\le \dim(S_1\cap S_2)\le t<\ell.
 \end{align*}
 Hence, $\Phi(S_1)\neq\Phi(S_2)$ for every $S_1\neq S_2$ and $\Phi$ is injective.
 
At this point, for every $S\in \mS$ choose a subspace $T_S\subseteq \Phi(S)$ such that $\dim(T_S)=\ell$. By the intersection property, we have $\dim(T_{S_1}\cap T_{S_2})\le t$ for every $S_1,S_2 \in \mS$ with $S_1\neq S_2$, and therefore the set
$$\{T_S\,:\,S\in \mS\}$$
is a constant-dimension subspace code in $\Gr_q(\ell,u)$ of size $|\mS|$ and minimum distance at least $2(\ell-t)$. Thus, we conclude that $|\mS|\le A_q(u,\ell,2(\ell-t))$.

The second part of the statement follows from the fact that, when $t=\ell-1$, then $2(\ell-t)=2$ and $$A_q(u,\ell,2)=\qbin{u}{\ell}{q}.$$

\end{proof}

\begin{remark}
\cref{prop:upper_bound} also clarifies the relation between the problem considered here
and standard block constructions of constant-dimension codes, such as coset
constructions; see e.g. \cite{heinlein2017coset}. 
Coset constructions are general methods for producing
large constant-dimension codes in the full Grassmannian $\Gr_q(k,n)$. They are typically based on a decomposition of the ambient space into coordinate blocks,
together with rank-metric ingredients, and their aim is to obtain good lower
bounds on the unrestricted quantity $A_q(n,k,d)$. In particular, they are designed to control the dimension of the intersection of two codewords in $\F_q^n$.

The situation considered in this paper is different. Here one fixes a subspace
$U\subseteq \F_q^n$ and requires all codewords to lie in the Schubert variety
\[
    \Omega_{U,\ell}
    =
    \{W\in \Gr_q(k,n): \dim(W\cap U)\ge \ell\}.
\]
Thus, the intersections with the fixed subspace $U$ impose an additional
constraint. \cref{prop:upper_bound} shows that, when $t\leq\ell-1$, these intersections
already yield the upper bound
\[
    m_q(n,k,u,\ell,t)
    \le
    A_q(u,\ell,2(\ell-t)).
\]
Consequently, any construction attaining this bound must have the property that
the $\ell$-dimensional subspaces extracted from the intersections with $U$ form
an optimal constant-dimension subspace code in $\Gr_q(\ell,u)$.

This requirement is not part of the usual coset constructions. Such
constructions may have a block form which, after a suitable choice of coordinates,
forces the codewords to meet a fixed
coordinate subspace in a prescribed dimension. In this sense, some of the
coset constructions may be viewed as Schubert subspace codes.
However, membership in a Schubert variety is not enough to attain the bound of \cref{prop:upper_bound}.
Equality requires that the map
\[
    W \longmapsto T_W\subseteq W\cap U,
    \qquad \dim(T_W)=\ell,
\]
realizes an optimal code of size $A_q(u,\ell,2(\ell-t))$ inside $U$. Standard
coset constructions are not designed to impose this optimal
intersection pattern with a fixed subspace, and indeed, to our knowledge, they do not meet the bound of \cref{prop:upper_bound}.
\end{remark}

\begin{remark}
    If $\ell=1$, $t=0$ and $u\leq\frac{n}{2}$, it was shown in \cite{alfarano2024schubert} that, whenever $k|n$ and there exists a $u$-dimensional scattered $\Fq$-subspace of $\F_{q^k}^{\frac{n}{k}}$, there is a construction that achieves the bound of \cref{prop:upper_bound}. In the case $\ell=k$ and $t=k-1$, one can also trivially achieve the bound by selecting all $k$-dimensional subspaces contained in $U$.
\end{remark}

Our goal is to find more constructions of $(\ell,t)$-intersecting sets whose size achieves the upper bound of \cref{prop:upper_bound} for different values of $\ell$ and $t$, with $t\leq \ell-1$. In this case, following the proof of the upper bound, one can hope to construct optimal Schubert subspace codes by assigning to each $\ell$-dimensional subspace of $U$ a suitable $k$-dimensional codeword in~$\Omega_{U,\ell}$.

Thus, throughout the paper, the terminology of $(\ell,t)$-intersecting sets is used as a convenient way to describe Schubert subspace codes contained in the Schubert variety $\Omega_{U,\ell}$. More precisely, every family considered in the sequel is a constant-dimension code in $\Gr_q(k,n)$ whose codewords satisfy the Schubert condition $\dim(W\cap U)\ge \ell$, and whose minimum subspace distance is controlled by the parameter $t$. Therefore, constructing large $(\ell,t)$-intersecting sets is exactly the problem of constructing large Schubert subspace codes in $\Omega_{U,\ell}$ with minimum distance at least $2k-2t$. In particular, when the upper bound of \cref{prop:upper_bound} is attained, the corresponding family is an optimal Schubert subspace code with respect to this Schubert variety and distance constraint.

\subsection{\emph{q}-Johnson graphs and their powers}\label{sec:q-Johnson}

In this subsection, we introduce the $q$-Johnson graph and some results about its chromatic number.
We first recall the definition of the classical \textit{Johnson graph}.

\begin{definition}
The \textbf{Johnson graph} $J(n,k)$ is defined as follows. The vertices of the graph are the subsets of cardinality $k$ of a set with $n$ elements. There is an edge between two vertices if the corresponding subsets intersect in a subset of cardinality $k-1$.
\end{definition}

The $q$-analogue for subspaces is known as the \textit{$q$-Johnson graph} or \textit{Grassmann graph}.

\begin{definition}
    Given a finite field of order $q$, the \textbf{$q$-Johnson graph} $J_q(n,k)$ is defined as follows. The vertices of the graph are the $k$-dimensional subspaces of an $n$-dimensional vector space over $\Fq$. There is an edge between two vertices if the corresponding subspaces intersect in a subspace of dimension $k-1$.
\end{definition}

For our construction, it is useful to know the \textbf{chromatic number} of this graph, i.e., the
minimum number of colors needed to color the vertices of the graph in such a way that two
adjacent vertices are always of different colors. We denote the chromatic number of a graph $G$
by $\chi(G)$.

We recall the following classical bound from \cite{brooks1941colouring} on the chromatic number
of a graph.

\begin{theorem}[Brooks' theorem]\label{thm:brooks}
Given a connected graph $G$ with maximum degree $\Delta$, its chromatic number is bounded by
$$
\chi(G)\leq \Delta+1.
$$
Moreover, equality holds if and only if $G$ is a complete graph or an odd cycle.
\end{theorem}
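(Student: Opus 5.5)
The plan is the standard greedy argument for the bound $\chi(G)\le\Delta+1$, followed by a separate analysis of the equality case. Order the vertices $v_1,\dots,v_N$ arbitrarily and color them one at a time, giving $v_i$ the smallest color in $\{1,\dots,\Delta+1\}$ not used by its already-colored neighbours. Since $v_i$ has at most $\Delta$ neighbours, such a color always exists, so $\chi(G)\le \Delta+1$. For the ``if'' direction of the equality statement, I would check directly that $\chi(K_{\Delta+1})=\Delta+1$ and that an odd cycle (where $\Delta=2$) is not $2$-colorable, hence needs $3$ colors.

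The substantive part is the ``only if'' direction: if $G$ is connected and neither complete nor an odd cycle, then $\chi(G)\le\Delta$. I would follow Lovász's proof.

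First, dispose of small degree. If $\Delta\le 1$, then $G$ is $K_1$ or $K_2$, both complete. If $\Delta=2$, then $G$ is a path or a cycle; excluding odd cycles and $K_3$ leaves paths and even cycles, which are $2$-colorable. So assume $\Delta\ge3$.

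If $G$ is not $\Delta$-regular, pick a vertex $r$ with $\deg r<\Delta$. Order the vertices by decreasing distance from $r$ in a BFS tree, so that $r$ comes last and every other vertex has a neighbour appearing later. Greedy coloring then uses at most $\Delta$ colors: each vertex $v\neq r$ has at most $\Delta-1$ neighbours coloured before it, and $r$ itself has fewer than $\Delta$ neighbours.

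In the regular case, the key step is to find a vertex $v$ with two non-adjacent neighbours $a,b$ such that $G-\{a,b\}$ is connected. Given such a triple, give $a$ and $b$ the same colour first, then order the remaining vertices of $G-\{a,b\}$ by decreasing distance from $v$, with $v$ last. Every vertex except $v$ has a later neighbour, so it sees at most $\Delta-1$ earlier colours. When $v$ is reached, two of its $\Delta$ neighbours share a colour, so at most $\Delta-1$ colours are blocked and a free colour remains.

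I expect finding the triple $(v,a,b)$ to be the main obstacle. I would split according to the connectivity of $G$.
\begin{itemize}
\item If $G$ is $3$-connected: since $G$ is not complete, there are vertices at distance exactly $2$. Take $a,b$ with a common neighbour $v$ and $ab\notin E(G)$. Removing two vertices leaves $G$ connected.
\item If $G$ has connectivity exactly $2$: choose $v$ such that $G-v$ is not $2$-connected but still connected. Then $v$ has neighbours $a,b$ in two distinct leaf blocks of $G-v$, each avoiding the cut vertices of $G-v$; such neighbours exist because $G$ itself is $2$-connected. These $a,b$ are non-adjacent, $G-\{v,a,b\}$ is connected, and $v$ has degree $\Delta\ge3$, so it keeps a neighbour outside $\{a,b\}$ and $G-\{a,b\}$ is connected.
\item If $G$ has a cut vertex: colour each block separately with at most $\Delta$ colours, using the previous cases or the degree bound within the block. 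Then permute colours so that the colourings agree at shared cut vertices; this works because the block–cutvertex graph is a tree.
\end{itemize}
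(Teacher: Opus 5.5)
The paper does not prove this statement. It quotes Brooks' theorem from \cite{brooks1941colouring} as a classical result and uses only its ``only if'' direction, to bound $\chi(J_q(n,k))$ and $\chi(J_q(n,k)^s)$ by the maximum degree, so there is no argument in the paper to compare your route with. What you propose is Lov\'asz's short proof: greedy colouring along a BFS order ending at a low-degree vertex, and in the regular case a triple $(v,a,b)$ with $a,b$ given the same colour. It is correct and self-contained, whereas the paper relies on the literature.

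One step should be justified more carefully. In the cut-vertex case you colour the blocks ``using the previous cases''. However, the blocks of $G$ are not $\Delta$-regular, and your connectivity-$2$ argument used $\deg v=\Delta\ge 3$ to keep $G-\{a,b\}$ connected, so those cases do not transfer verbatim. The clean justification is your non-regular case:
\begin{itemize}
\item every block $B$ contains a cut vertex $c$ of $G$;
\item since $c$ also has neighbours outside $B$, we get $\deg_B(c)<\deg_G(c)=\Delta$;
\item ordering $B$ by decreasing distance from $c$ then colours $B$ greedily with $\Delta$ colours.
\end{itemize}
This also shows directly that no block can be $K_{\Delta+1}$. With that, gluing the colourings along the block--cutvertex tree works as you describe.

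In the connectivity-$2$ case, the claim that $G-\{v,a,b\}$ is connected also deserves a line. The vertices $a$ and $b$ are non-cut vertices of $G-v$ lying in different leaf blocks. Deleting $a$ therefore leaves the block containing $b$ untouched, so $b$ is still not a cut vertex.
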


\begin{corollary}\label{cor:X(q-Johnson)_from_Brooks}
If $2\leq k\leq n-2$, then the chromatic number of the $q$-Johnson graph $J_q(n,k)$ satisfies
$$
\chi(J_q(n,k)) \leq q\cdot\qbin{k}{1}{q}\cdot\qbin{n-k}{1}{q}.
$$
\end{corollary}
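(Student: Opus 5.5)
The bound will follow from Brooks' theorem (\cref{thm:brooks}) once we know three facts about $J_q(n,k)$: it is regular of degree
$$\Delta=q\qbin{k}{1}{q}\qbin{n-k}{1}{q},$$
it is connected, and it is neither a complete graph nor an odd cycle when $2\le k\le n-2$. Brooks' theorem then gives $\chi(J_q(n,k))\le\Delta$. In fact the weaker inequality $\chi\le\Delta+1$ needs only the degree count, but the stated bound without the $+1$ requires ruling out the two exceptional cases.

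\textbf{Degree.} Fix $W\in\Gr_q(k,n)$. A neighbour $W'$ is determined by the pair $(H,W')$ with $H=W\cap W'$, so we count in two stages.
\begin{itemize}
\item There are $\qbin{k}{k-1}{q}=\qbin{k}{1}{q}$ hyperplanes $H$ of $W$.
\item For fixed $H$, the $k$-spaces containing $H$ correspond to points of $\F_q^n/H$, which has dimension $n-k+1$. Hence there are $\qbin{n-k+1}{1}{q}$ of them.
\item We must exclude those contained in $W$, i.e. the points of $W/H$, of which there is exactly one (namely $W$ itself).
\end{itemize}
Every other such space $W'$ satisfies $W'\cap W=H$. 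So each $H$ contributes $\qbin{n-k+1}{1}{q}-1=q\qbin{n-k}{1}{q}$ neighbours, and distinct $H$ give distinct $W'$. This yields $\Delta=q\qbin{k}{1}{q}\qbin{n-k}{1}{q}$.

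\textbf{Connectivity.} Take two $k$-spaces $W,W'$ with $\dim(W\cap W')=k-j$. We can pass from $W$ to $W'$ in $j$ steps: at each step, replace one vector of a basis of $W$ outside $W\cap W'$ by a vector of $W'$ outside the current space. Each replacement changes the intersection with the current space by exactly one dimension, so consecutive spaces are adjacent.

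\textbf{Excluding the exceptional cases.}
\begin{itemize}
\item \emph{Not complete.} Since $2\le k\le n-2$, we have $n\ge 2k$ or $n-k\ge 2$ with $k\ge2$. In either case there are two $k$-spaces meeting in dimension at most $k-2$: take $W=\langle e_1,\dots,e_k\rangle$ and $W'=\langle e_1,\dots,e_{k-2},e_{k+1},e_{k+2}\rangle$. These are nonadjacent, so the graph is not complete.
\item \emph{Not an odd cycle.} A cycle has degree $2$, but $\Delta\ge q(q+1)(q+1)\ge 18>2$.
\end{itemize}

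The main step requiring care is the degree count, in particular checking that each neighbour is counted exactly once via its hyperplane $H=W\cap W'$. The remaining steps are routine.
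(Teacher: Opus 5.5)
Your proposal is correct and follows the same route as the paper. You count the degree $\Delta=q\qbin{k}{1}{q}\qbin{n-k}{1}{q}$ by choosing a hyperplane $H$ of $W$ and then one of the $\qbin{n-k+1}{1}{q}-1$ other $k$-spaces through $H$, and you apply Brooks' theorem after ruling out complete graphs and odd cycles. Your version is slightly more complete than the paper's: you verify that $J_q(n,k)$ is connected, which the paper's statement of Brooks' theorem requires but its proof leaves implicit, and you give explicit witnesses for non-completeness and for $\Delta>2$.
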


\begin{proof}
Note that the $q$-Johnson graph is regular of degree
$$
\Delta=q\cdot\qbin{k}{1}{q}\cdot\qbin{n-k}{1}{q}.
$$
Indeed, for each vertex, there are
$$
\qbin{k}{k-1}{q}=\qbin{k}{1}{q}
$$
ways to choose one of its $(k-1)$-dimensional subspaces, and for each of them there are
$$
\left(\qbin{n-(k-1)}{1}{q}-1\right)
=
q\cdot\qbin{n-k}{1}{q}
$$
ways to complete it to a $k$-dimensional space different from the original one. Since
$2\leq k\leq n-2$, the graph $J_q(n,k)$ is not complete. Moreover, it is not an odd cycle.
Hence, by \cref{thm:brooks}, its chromatic number is bounded by its maximum degree~$\Delta$.
\end{proof}

A stronger bound was recently proved in \cite{dhaeseleer2025chromatic}. 

\begin{theorem}[\cite{dhaeseleer2025chromatic}]\label{thm:bound_JozefienVlad1}
The chromatic number of the $q$-Johnson graph $J_q(n,k)$ is bounded by
$$\chi(J_q(n,k))
\leq
\qbin{n}{1}{q}.
$$
\end{theorem}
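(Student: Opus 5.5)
The goal is a proper coloring of $J_q(n,k)$ with $\qbin{n}{1}{q}$ colors. The plan is to take a color class for each point of the projective space and then prove properness.

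First, fix a Singer cycle. Identify $\Fq^n$ with $\F_{q^n}$. The multiplicative group $\F_{q^n}^*/\Fq^*$ is cyclic of order $N=\qbin{n}{1}{q}$. It acts regularly on the points of $\mathrm{PG}(n-1,q)$, and it permutes $\Gr_q(k,n)$ via $W\mapsto \alpha W$.

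Next, define the coloring through an invariant that changes whenever a single vector is exchanged. Two adjacent vertices $W,W'$ share a $(k-1)$-space $H$, with $W=H+\langle a\rangle$ and $W'=H+\langle b\rangle$. I want a map $c:\Gr_q(k,n)\to \mathbb{Z}_N$ such that $c(W)\neq c(W')$ in this situation.

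A natural candidate is additive. Let $\log:\F_{q^n}^*/\Fq^*\to\mathbb{Z}_N$ be the discrete logarithm. Put $c(W)=\sum_{P\in W}\log P \bmod N$, the sum running over the points of $W$. For adjacent $W,W'$, the points common to both are those of $H$. Hence $c(W)-c(W')=\sum_{P\in W\setminus H}\log P-\sum_{P\in W'\setminus H}\log P$. This is a difference of sums over the $q^{k-1}$ points of the affine parts, namely $\langle a+h\rangle$ for $h\in H$, and similarly with $b$.

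The main obstacle is proving that this difference is nonzero modulo $N$. That is unlikely to hold for naive sums. So I would instead try a multiplicative invariant. Set $c(W)=\prod_{w\in W\setminus\{0\}} w$, taken in $\F_{q^n}^*/\Fq^*$, or some normalized product.

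For the exchange step, note that $\prod_{h\in H}(a+h)$ equals $L_H(a)$, where $L_H$ is the subspace polynomial of $H$. This follows from the factorization $L_H(x)=\prod_{h\in H}(x-h)$, which is $\Fq$-linear. So the ratio of the products for $W$ and $W'$ is, up to $\Fq^*$-scalars, a power of $L_H(a)/L_H(b)$.

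Now $L_H$ is $\Fq$-linear with kernel $H$. Its image is an $(n-k+1)$-space, and $L_H(a)$ and $L_H(b)$ are $\Fq$-independent there, because $a,b,H$ together span a $(k+1)$-space. Therefore $L_H(a)/L_H(b)\notin\Fq$.

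The delicate point is the exponent, which involves $q^{k-1}$ and $|\Fq^*|$ factors. I would choose the precise normalization so that the color is $\prod_{P}\,(\text{chosen representative})$ modulo $\Fq^*$. This should make the ratio exactly $L_H(a)/L_H(b)$ modulo $\Fq^*$, which is nontrivial.

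This gives a proper coloring into $\F_{q^n}^*/\Fq^*$, which has $N=\qbin{n}{1}{q}$ elements, proving the bound. I expect the bookkeeping of representatives to be the hard step. If it fails, I would fall back on the Plücker-type determinant invariant, namely the wedge $w_1\wedge\cdots\wedge w_k$ composed with a suitable linear functional into $\F_{q^n}$.
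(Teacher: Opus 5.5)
The paper gives no proof of this statement; it is quoted from \cite{dhaeseleer2025chromatic}, so your argument has to stand on its own. In its final form it does, and it is short. Set $c(W)=\prod_{P}v_P$ modulo $\Fq^*$, where $P$ runs over the points of $W$ and $v_P\in P\setminus\{0\}$ is arbitrary. Replacing $v_P$ by $\lambda v_P$ changes the product by $\lambda\in\Fq^*$, so $c$ is well defined, with values in the cyclic group $\F_{q^n}^*/\Fq^*$ of order $\qbin{n}{1}{q}$. If $W=H+\langle a\rangle$ with $\dim H=k-1$, the points of $W$ outside $H$ are exactly the $q^{k-1}$ distinct points $\langle a+h\rangle$, $h\in H$. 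Choosing the representatives $a+h$ gives $c(W)=c(H)\prod_{h\in H}(a+h)=c(H)\,L_H(a)$, with no stray exponents. Hence $c(W)/c(W')\equiv L_H(a)/L_H(b)$ modulo $\Fq^*$. If this ratio were some $\mu\in\Fq^*$, then $a-\mu b\in\ker L_H=H$, forcing $W=W'$. So there is no delicate bookkeeping, and you should state this as the proof rather than as something that ``should'' work. The Singer-cycle paragraph and the Pl\"ucker fallback can be dropped. Indeed, with the normalized representatives $w_i+c_1w_1+\dots+c_{i-1}w_{i-1}$, $c(W)$ is just the Moore determinant of a basis of $W$ modulo $\Fq^*$, so the fallback would produce the same invariant anyway.

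Two side claims in your write-up are wrong and should be removed. First, the additive candidate $\sum_{P\in W}\log P \bmod N$ that you discard is not a different map. It equals $\log c(W)$, so it is exactly the invariant above: the $q$-analogue of the classical colouring $S\mapsto\sum_{s\in S}s \bmod n$ of $J(n,k)$, and it works. Second, the product over \emph{all} nonzero vectors of $W$ fails in general. Since $\prod_{h\in H}(\lambda a+h)=\lambda L_H(a)$, it gives the ratio $(L_H(a)/L_H(b))^{q-1}$. The $(q-1)$-th power map on the cyclic group $\F_{q^n}^*/\Fq^*$, whose order is congruent to $n$ modulo $q-1$, is injective only if $\gcd(n,q-1)=1$. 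For $q=3$, $n=2$, $k=1$, this version colours the point $\langle a\rangle$ by $a^2$ modulo $\F_3^*$ and uses only two colours on the four pairwise adjacent vertices. So taking one representative per point is essential, not a normalization detail.
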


We will also need chromatic bounds for powers of the $q$-Johnson graph. Recall that, for a graph $G$ and an integer $s\geq 1$, the \textbf{$s$-th power $G^s$} is the graph with the same vertex set as $G$, where two distinct vertices are adjacent if their geodesic distance in $G$ is at most $s$.
In the case of the $q$-Johnson graph, two vertices of $J_q(n,k)^s$ are adjacent if and only if
the corresponding $k$-dimensional subspaces intersect in dimension at least $k-s$. Indeed, the geodesic
distance between two $k$-dimensional subspaces $A$ and $B$ in the $q$-Johnson graph is
$$
k-\dim(A\cap B)=\frac{1}{2}\dS(A,B).
$$
Thus, increasing the power $s$ makes the adjacency relation stronger. In particular, once $s$ is at least
the diameter of $J_q(n,k)$, the graph $J_q(n,k)^s$ is complete. Since the diameter is
$\min\{k,n-k\}$, this happens for $s\geq \min\{k,n-k\}$.

For the chromatic number of the $s$-th power of the $q$-Johnson graph, the degree is given by
$$
\Delta(J_q(n,k)^s)
=
\sum_{i=1}^{\min\{s,k,n-k\}} q^{i^2}\qbin{k}{k-i}{q}\qbin{n-k}{i}{q}.
$$
Therefore, \cref{thm:brooks} gives
$$
\chi(J_q(n,k)^s)
\leq
\Delta(J_q(n,k)^s)+1.
$$
If $s<\min\{k,n-k\}$, then  $J_q(n,k)^s$ is neither complete nor an odd cycle, hence the stronger bound
$$
\chi(J_q(n,k)^s)
\leq
\Delta(J_q(n,k)^s)
$$
holds.

In \cite{dhaeseleer2026chromatic}, the authors study the chromatic number of $q$-Johnson
graphs and their powers using maximum rank distance (MRD) codes. We will use the following upper bound.

\begin{theorem}[{\cite[Theorem 1.3]{dhaeseleer2026chromatic}}]\label{thm:bound_JozefienVlad2}
Let $1\leq s< \min\{k,n-k\}$. The chromatic number of the $s$-th power of the $q$-Johnson
graph $J_q(n,k)$ is bounded by
$$\qbin{\max\{k,n-k\}+s}{s}{q}\le 
\chi(J_q(n,k)^s)
\leq
(1+o(1))n^s
\qbin{\max\{k,n-k\}+s}{s}{q},
$$
where $o(1)\to 0$ as $n\to \infty$ and  does not depend on $q$.
\end{theorem}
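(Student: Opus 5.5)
The statement has two inequalities, and I would prove them by different methods: a clique for the lower bound and an explicit coloring built from MRD codes for the upper bound. Throughout, put $m=\max\{k,n-k\}$ and recall that two vertices of $J_q(n,k)^s$ are adjacent exactly when the subspaces meet in dimension at least $k-s$.

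\textbf{Lower bound.} Since $\chi(G)\ge\omega(G)$, it suffices to exhibit a clique of size $\qbin{m+s}{s}{q}$. I would split into two cases.
\begin{itemize}
\item If $k\ge n-k$, fix a $(k+s)$-dimensional subspace $Y\subseteq\Fq^n$; this is possible because $s<n-k$. Any two $k$-subspaces of $Y$ meet in dimension at least $2k-(k+s)=k-s$. Hence the $k$-subspaces of $Y$ form a clique, of size $\qbin{k+s}{k}{q}=\qbin{k+s}{s}{q}$.
\item If $n-k>k$, fix a $(k-s)$-dimensional subspace $W$, which exists since $s<k$. Any two $k$-spaces containing $W$ are adjacent. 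Counting them in the $(n-k+s)$-dimensional quotient $\Fq^n/W$ gives a clique of size $\qbin{n-k+s}{s}{q}$.
\end{itemize}
In both cases the clique has size $\qbin{m+s}{s}{q}$, which proves the lower bound.

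\textbf{Upper bound, one Schubert cell.} Represent each vertex by its reduced row echelon form, with pivot set $P\in\binom{[n]}{k}$ and nonpivot part a $k\times(n-k)$ matrix $M$. Inside the cell with pivots in the first $k$ positions, the row spaces of $[I\,|\,M]$ and $[I\,|\,M']$ meet in dimension $k-\mathrm{rk}(M-M')$. Take a linear MRD code $\mathcal C\subseteq\Fq^{k\times(n-k)}$ with minimum rank distance $s+1$. Coloring each vertex by the coset $M+\mathcal C$ is then proper on this cell. The number of colors is $q^{ms}$, and $q^{ms}\le\qbin{m+s}{s}{q}$.

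\textbf{Upper bound, all cells.} To cover every vertex, I would color a subspace by the pair $(c(P),\,M+\mathcal C)$. Here $M$ is padded into a common $k\times(n-k)$ format, and $c$ is a proper coloring of a suitable power of the ordinary Johnson graph on pivot sets using $(1+o(1))n^s$ colors; a classical greedy or algebraic coloring gives this. The required fact is that adjacent subspaces have pivot sets which are close in the Johnson graph (symmetric difference controlled by $s$). Given that, two adjacent vertices either get different $c$-colors, or lie in compatible cells where the rank argument of the previous step applies.

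\textbf{Main obstacle.} The hard step is the compatibility in the last paragraph. Subspaces in different Schubert cells must be compared through a common matrix model so that $\mathrm{rk}(M-M')\le s$ still follows from $\dim(A\cap B)\ge k-s$. I would first try restricting both subspaces to the union of their pivot positions and using the fact that rank can only drop under projection. I expect this bookkeeping, rather than the clique or the MRD counting, to be the main difficulty.
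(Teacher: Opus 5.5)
The paper gives no proof of this statement; it imports it from \cite{dhaeseleer2026chromatic}. Your overall plan is to use cliques for the lower bound, and for the upper bound a coloring of pivot sets combined with MRD cosets inside each Schubert cell. That plan is sound. It is also consistent with how the paper uses the result: the coloring of $J_q(6,3)^2$ with $10q^6$ colors quoted in the paper's example is exactly $\chi(J(6,3)^2)\cdot q^{ms}$, with $\chi(J(6,3)^2)=10$ and $ms=6$ for $m=\max\{k,n-k\}$. Your lower bound and single-cell count are correct.

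The step you flag as the main obstacle is not one. Take $c$ to be a proper coloring of $J(n,k)^s$ itself. The leading positions of the nonzero vectors of $A\cap B$ form a set of exactly $\dim(A\cap B)$ indices, and this set is contained in $P(A)\cap P(B)$. So adjacency in $J_q(n,k)^s$ forces $|P(A)\cap P(B)|\ge k-s$. Hence adjacent subspaces with the same $c$-color must have $P(A)=P(B)$. Your identity $\dim(A\cap B)=k-\mathrm{rk}(M-M')$ then applies after a common column permutation, with $M$ the $k\times(n-k)$ block of non-pivot columns. No padding, cross-cell comparison or projection argument is needed.

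The genuine gap is in the step you treat as routine, namely $\chi(J(n,k)^s)\le(1+o(1))n^s$. A greedy coloring does not give this. It can use up to $1+\sum_{i=1}^{s}\binom{k}{i}\binom{n-k}{i}$ colors, which is of order $n^{2s}$ when $k$ is near $n/2$. You need an explicit algebraic coloring such as Graham--Sloane's:
\begin{itemize}
\item Fix distinct $\alpha_1,\dots,\alpha_n\in\F_Q$, with $Q$ the least prime power $\ge n$, and put $f_P(z)=\prod_{i\in P}(z-\alpha_i)$.
\item Color $P$ by the coefficients of $z^{k-1},\dots,z^{k-s}$ in $f_P$.
\item If $P\ne P'$ receive the same color, then $\deg(f_P-f_{P'})\le k-s-1$.
\item On the other hand, $f_P-f_{P'}=f_{P\cap P'}\,(f_{P\setminus P'}-f_{P'\setminus P})$. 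The second factor is nonzero, because $f_{P\setminus P'}$ and $f_{P'\setminus P}$ are monic of the same positive degree with disjoint root sets. Hence $\deg(f_P-f_{P'})\ge|P\cap P'|$.
\item Therefore $|P\cap P'|\le k-s-1$, so the coloring is proper with $Q^s$ colors.
\end{itemize}
By the prime number theorem $Q=(1+o(1))n$, which gives $(1+o(1))n^s$ colors for fixed $s$. Altogether you get at most $Q^s q^{ms}\le(1+o(1))n^s\qbin{m+s}{s}{q}$ colors, with the $(1+o(1))$ factor independent of $q$, as the statement requires.
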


In particular, in \cite{dhaeseleer2026chromatic} they derive that for fixed $n,k,s$ and growing
$q$, the chromatic number has asymptotic order
\begin{equation}\label{eq:asymptotic_chromatic}
\chi(J_q(n,k)^s)=\Theta\left(q^{s\max\{n-k,k\}}\right).
\end{equation}

For $s=1$, we will use the sharper bound of \cref{thm:bound_JozefienVlad1}. For larger values
of $s$, the bound of \cref{thm:bound_JozefienVlad2} will be useful for the construction in the case $t\leq\ell-1$. In that application, the graph will be $J_q(u,\ell)^{k-t-1}$, so the graph is complete when $k-t-1\geq \min\{\ell,u-\ell\}$ and should be treated separately.

\section{A direct-sum construction}\label{sec:UV_construction}
In this section, we focus on the general case $t\le \ell-1$, and we assume that
$1\leq \ell\leq k-1$. Recall that, under these assumptions, the upper bound of
\cref{prop:upper_bound} holds, and we have
$$
m_q(n,k,u,\ell,t)\leq A_q(u,\ell,2(\ell-t)).
$$

Let $U$ be a $u$-dimensional subspace of $\Fq^n$, and let $V$ be a complement of $U$ in
$\Fq^n$, so that $U\cap V=\{0\}$ and $U\oplus V=\Fq^n$. We also assume that
$k-\ell\leq n-u$, so that $V$ contains subspaces of dimension $k-\ell$.

We will construct $(\ell,t)$-intersecting sets $\mS$ in the Schubert variety
$\Omega_{U,\ell}$ satisfying the stronger condition $\dim(S\cap U)=\ell$ for every
$S\in\mS$. Whenever such a construction has cardinality $A_q(u,\ell,2(\ell-t))$, it is
\emph{optimal} also in the full Schubert variety $\Omega_{U,\ell}$, by \cref{prop:upper_bound}. We will start with an optimal constant-dimension subspace code 
$$\cA\subseteq \{A\subseteq U : \dim(A)=\ell\}=\Gr_q(\ell,u)$$ 
of minimum subspace distance $2(\ell-t)$ and size $|\cA|=A_q(u,\ell,2(\ell-t))$.

Note that, when $t=\ell-1$, we have that $\cA=\Gr_q(\ell,u)$, that is, the constant-dimension subspace code $\cA$ is the full Grassmannian of $\ell$-dimensional subspaces of $U$.
Throughout the paper, we will first state our results in the general case $t\leq\ell-1$, and then analyze the case $t=\ell-1$ in which we can give more explicit bounds.

We define
\begin{align*}
\cB&=\{B\subseteq V : \dim(B)=k-\ell\}=\Gr_q(k-\ell,n-u).
\end{align*}
For every $A\in\cA$ and $B\in\cB$, we have $\dim(A+B)=k$, since $U$ and $V$ intersect
trivially.

The direct-sum constructions considered in this section are obtained from maps
$$
\varphi:\cA\longrightarrow \cB.
$$
To such a map we associate the family
$$
\mS_\varphi:=\{A\oplus \varphi(A): A\in\cA\}.
$$
Every element of $\mS_\varphi$ has dimension $k$ and intersects $U$ exactly in dimension
$\ell$. This means that $\cS_\varphi$ is a subspace code of constant dimension $k$ and, in particular, a Schubert subspace code contained in the Schubert variety $\Omega_{U,\ell}$.
Moreover, it has cardinality
$$
|\mS_\varphi|=|\cA|=A_q(u,\ell,2(\ell-t)).
$$
Hence, by \cref{prop:upper_bound}, if $\mS_\varphi$ is an $(\ell,t)$-intersecting set,
then it is optimal.
This section is therefore devoted to finding necessary and sufficient conditions for $\cS_\varphi$ to be an $(\ell,t)$-intersecting set.

The following lemma is the main observation behind the construction.

\begin{lemma}\label{lem:dim(S1capS2)}
For $i=1,2$, let $A_i\in\cA$, $B_i\in\cB$, and $S_i=A_i\oplus B_i$. Then
$$
\dim(S_1\cap S_2)=\dim(A_1\cap A_2)+\dim(B_1\cap B_2).
$$
\end{lemma}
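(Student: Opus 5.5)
We will prove the two inequalities separately, using only that $\Fq^n=U\oplus V$ with $A_i\subseteq U$ and $B_i\subseteq V$. Let $\pi_U:\Fq^n\to U$ and $\pi_V:\Fq^n\to V$ denote the projections associated with this direct-sum decomposition.

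The inequality $\dim(S_1\cap S_2)\geq \dim(A_1\cap A_2)+\dim(B_1\cap B_2)$ is immediate. Both $A_1\cap A_2$ and $B_1\cap B_2$ are contained in $S_1\cap S_2$. Moreover, $A_1\cap A_2\subseteq U$ and $B_1\cap B_2\subseteq V$, and $U\cap V=\{0\}$. Hence their sum is direct and lies in $S_1\cap S_2$.

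For the reverse inclusion, we will show that $S_1\cap S_2\subseteq (A_1\cap A_2)\oplus(B_1\cap B_2)$. Take $x\in S_1\cap S_2$. From $x\in S_1$ we can write $x=a_1+b_1$ with $a_1\in A_1$ and $b_1\in B_1$. From $x\in S_2$ we can write $x=a_2+b_2$ with $a_2\in A_2$ and $b_2\in B_2$. Since $A_1,A_2\subseteq U$ and $B_1,B_2\subseteq V$, uniqueness of the decomposition in $U\oplus V$ gives
\[
a_1=a_2=\pi_U(x) \quad\text{and}\quad b_1=b_2=\pi_V(x).
\]
Therefore $\pi_U(x)\in A_1\cap A_2$ and $\pi_V(x)\in B_1\cap B_2$, so $x=\pi_U(x)+\pi_V(x)$ lies in the desired direct sum. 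Combining this with the first part gives the equality $S_1\cap S_2=(A_1\cap A_2)\oplus(B_1\cap B_2)$, and taking dimensions yields the statement.

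No step here is a genuine obstacle. The only point needing care is the use of uniqueness of the $U\oplus V$ decomposition, which is exactly where the hypothesis $U\cap V=\{0\}$ enters. This is also why the lemma fails for arbitrary subspaces $A_i,B_i$ that do not lie in the complementary subspaces $U$ and $V$.
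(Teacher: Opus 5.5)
Your proof is correct, but it takes a different route from the paper's. The paper never describes $S_1\cap S_2$ directly. Instead it notes that $S_1+S_2=(A_1+A_2)\oplus(B_1+B_2)$, which is direct because $A_1+A_2\subseteq U$ and $B_1+B_2\subseteq V$. It then applies Grassmann's formula three times (to the pairs $S_i$, $A_i$, $B_i$), plugging in $\dim A_i=\ell$ and $\dim B_i=k-\ell$, to get $\dim(S_1\cap S_2)=2k-\dim(A_1+A_2)-\dim(B_1+B_2)=\dim(A_1\cap A_2)+\dim(B_1\cap B_2)$.

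You instead identify the intersection itself. You prove the subspace equality $S_1\cap S_2=(A_1\cap A_2)\oplus(B_1\cap B_2)$ by an element chase using uniqueness of the $U\oplus V$ decomposition. This gives a structural statement rather than just a dimension count. It also needs no dimension bookkeeping, so it applies verbatim to any subspaces $A_i\subseteq U$, $B_i\subseteq V$.

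The paper's version avoids the element chase by working only with sums, whose directness is immediate. It is a pure dimension count, which is all that is used later. Both arguments ultimately rest on the same fact, $U\cap V=\{0\}$.
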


\begin{proof}
Since $A_1+A_2\subseteq  U$ and $B_1+B_2\subseteq  V$, we have
$$
(A_1+A_2)\cap(B_1+B_2)=\{0\}.
$$
Hence,
$$
\dim(A_1+A_2+B_1+B_2)
=
\dim(A_1+A_2)+\dim(B_1+B_2).
$$
Moreover, $S_1+S_2=A_1+A_2+B_1+B_2$. Therefore,
\begin{align*}
\dim(S_1\cap S_2)
&=\dim(S_1)+\dim(S_2)-\dim(S_1+S_2)\\
&=2k-\dim(A_1+A_2+B_1+B_2)\\
&=2k-\dim(A_1+A_2)-\dim(B_1+B_2)\\
&=2k-\left(2\ell-\dim(A_1\cap A_2)\right)
-\left(2(k-\ell)-\dim(B_1\cap B_2)\right)\\
&=\dim(A_1\cap A_2)+\dim(B_1\cap B_2).
\end{align*}
\end{proof}

Thus, finding an optimal direct-sum construction amounts to finding a map
$\varphi:\cA\to\cB$ such that, for all distinct $A_1,A_2\in\cA$, we have
$$
\dim(A_1\cap A_2)+
\dim(\varphi(A_1)\cap\varphi(A_2))
\leq t.
$$

In the following subsections, we study sufficient and necessary conditions for the existence of $\varphi:\cA\to\cB$ such that $\mS_\varphi$ is 
an $(\ell,t)$-intersecting set with cardinality
$$
|\mS|=A_q(u,\ell,2(\ell-t)).
$$
Recall that, by \cref{prop:upper_bound}, this is the maximum possible cardinality for an
intersecting set with such parameters.

\subsection{Sufficient conditions}\label{sec:sufficient_conditions}

We start with a simple construction. Recall that a \textbf{partial $m$-spread} in $\F_q^n$ is a family
of $m$-dimensional subspaces of $\F_q^n$ which are pairwise disjoint. Finding large partial
spreads in $\F_q^n$ has proven to be a very difficult mathematical problem. In the 1970s,
Beutelspacher discovered a construction of large partial spreads which leads to the following
theorem; see \cite{beutelspacher1975partial}.

\begin{theorem}[{\cite[Theorem 4.2]{beutelspacher1975partial}}]\label{thm:beutelspacher_partial_spread}
Let $m$ and $n$ be positive integers with $m\leq n$, and let $r$ be the remainder of $n$ modulo $m$.
Then $\Fq^n$ contains a partial $m$-spread of size
$$
\frac{q^n-q^{m+r}}{q^m-1}+1.
$$
\end{theorem}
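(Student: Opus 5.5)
The statement is Beutelspacher's theorem, so the plan is to reproduce his recursive construction. Throughout, write $n=am+r$ with $0\le r<m$; the case $a=1$ is trivial, so assume $a\ge 2$.

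The first step is the case $r=0$, where the stated bound $\frac{q^n-q^m}{q^m-1}+1=\frac{q^n-1}{q^m-1}$ is the size of a full spread. For this I use Segre's result recalled in the preliminaries; concretely, identify $\F_q^n$ with $\F_{q^m}^{a}$ and take the $\F_{q^m}$-lines, read as $m$-dimensional $\F_q$-subspaces.

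For $r>0$, the idea is to decompose $\F_q^n$ as $\F_q^{n-m-r}\oplus W$ with $\dim W=m+r$, which makes $n-m-r$ a multiple of $m$. The construction has three parts:
\begin{enumerate}
\item Take an $m$-spread $\mathcal T$ of $\F_q^{m+r}$'s "big part". More precisely, I use the standard lifting: identify $\F_q^{n}=\F_q^{n-m}\times\F_q^{m}$ with $n-m\equiv r\pmod m$.
\item The cleaner route is induction on $a$. Write $\F_q^n=X\oplus Y$ with $\dim Y=m$ and $\dim X=n-m$. Consider the subspaces $\{(x,\,\psi(x)) : x\in T\}$, where $T$ runs over a partial spread of $X$ and $\psi$ runs over linear maps $T\to Y$. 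Taking $X$ of dimension divisible by $m$ does not work directly when $r>0$.
\item Instead, follow Beutelspacher: let $H=\F_q^{n-r}$, which has dimension $(a)m$ and so carries a spread. Alternatively, use lifted MRD codes. Take $\F_q^n=\F_q^{m}\oplus\F_q^{n-m}$ and the set $\{\mathrm{rowspace}[I_m\mid M]\}$ with $M$ ranging over an MRD code of $m\times(n-m)$ matrices with minimum rank $m$. Such a code exists since $n-m\ge m$ and has size $q^{n-m}$. These subspaces pairwise meet trivially and all avoid $0\oplus\F_q^{n-m}$.
\end{enumerate}

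Next I recurse inside $0\oplus\F_q^{n-m}$. By induction on $a$ it contains a partial spread of size $\frac{q^{n-m}-q^{m+r}}{q^m-1}+1$. The base case is $n-m=m+r$, where a single subspace works, of size $1$.

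Summing the two contributions gives
\[
q^{n-m}+\frac{q^{n-m}-q^{m+r}}{q^m-1}+1=\frac{q^{n}-q^{m+r}}{q^m-1}+1 .
\]
Disjointness across the two layers holds because the lifted spaces meet $0\oplus\F_q^{n-m}$ trivially.

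The main step to justify is the existence of the MRD code with minimum rank distance $m$ in $\F_q^{m\times(n-m)}$, which I take from Gabidulin's construction. The argument also needs the trivial-intersection check: a common vector would give $v[I\mid M_1]=v[I\mid M_2]$, so $v(M_1-M_2)=0$, which forces $v=0$ since $M_1-M_2$ has full rank $m$.
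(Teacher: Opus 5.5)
The paper gives no proof of this statement. It is quoted from Beutelspacher and used as a black box, only to supply partial $(k-\ell)$-spreads in $V$, so there is no internal argument to compare against. Your final argument is correct and self-contained. It is the classical recursive construction behind Beutelspacher's bound, in its lifted-MRD formulation:
\begin{itemize}
\item the $q^{n-m}$ spaces $\mathrm{rowspace}[I_m\mid M]$ are pairwise disjoint;
\item they partition the nonzero vectors outside $0\oplus\F_q^{n-m}$, since $q^{n-m}(q^m-1)=q^n-q^{n-m}$;
\item recursing inside that $(n-m)$-dimensional subspace, which has the same remainder $r$, with base case $a=1$ gives $q^{n-m}+q^{n-2m}+\cdots+q^{m+r}+1=\frac{q^n-q^{m+r}}{q^m-1}+1$.
\end{itemize}
Citing the result keeps the paper short. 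Your proof makes the paper self-contained and exhibits the structure of the partial spread that the direct-sum construction actually uses.

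For the write-up, a few clean-ups are needed.
\begin{itemize}
\item Delete items 1 and 2 and the first sentence of item 3. They are abandoned attempts, and as written they suggest that several incompatible constructions are being combined.
\item Drop the separate treatment of $r=0$ via Segre. The same induction works verbatim for every $0\le r<m$; for $r=0$ it returns $\frac{q^n-1}{q^m-1}$.
\item In the disjointness check, start from $v[I_m\mid M_1]=w[I_m\mid M_2]$. Read off $v=w$ from the identity block, and only then conclude $v(M_1-M_2)=0$.
\item You do not need Gabidulin codes. Fix an $m$-dimensional $\F_q$-subspace $W=\langle w_1,\dots,w_m\rangle$ of $\F_{q^{n-m}}\cong\F_q^{n-m}$ (possible since $n-m\ge m$). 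Let $M_\alpha$ be the $m\times(n-m)$ matrix of the map $w\mapsto\alpha w$ on $W$. Then $M_\alpha-M_\beta=M_{\alpha-\beta}$, which has rank $m$ whenever $\alpha\neq\beta$, because multiplication by a nonzero field element is injective. So the $q^{n-m}$ matrices $M_\alpha$, $\alpha\in\F_{q^{n-m}}$, give the required code, and the whole proof becomes elementary.
\end{itemize}
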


\begin{remark}
If $m$ divides $n$, then the remainder $r$ in
\cref{thm:beutelspacher_partial_spread} is zero, and the bound becomes
$$
\frac{q^{n}-q^{m}}{q^{m}-1}+1
=
\frac{q^{n}-1}{q^{m}-1}.
$$
In this case, $\bF_q^n$ admits a complete $m$-spread, retrieving the existence result of Segre
\cite{segre1964teoria}.
\end{remark}

For our construction, we will need partial spreads in the space $V$, which by definition is an $(n-u)$-space such that $U\oplus V=\bF_q^n$.

\begin{proposition}\label{prop:partial_spread_simple}
Suppose that $V$ contains a partial $(k-\ell)$-spread $\mP$ such that
$|\mP|\geq A_q(u,\ell,2(\ell-t))$. Then
$$
m_q(n,k,u,\ell,t)=A_q(u,\ell,2(\ell-t)).
$$
\end{proposition}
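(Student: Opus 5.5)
The plan is to prove the two inequalities separately. The upper bound $m_q(n,k,u,\ell,t)\le A_q(u,\ell,2(\ell-t))$ is exactly \cref{prop:upper_bound}, which applies because we are in the standing regime $t\le \ell-1$ of this section. So all the work is a matching construction of the form $\mS_\varphi$.

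Fix an optimal code $\cA\subseteq\Gr_q(\ell,u)$ with minimum distance $2(\ell-t)$ and $|\cA|=A_q(u,\ell,2(\ell-t))$. Since $|\mP|\ge|\cA|$, I will choose an injective map $\varphi:\cA\to\mP\subseteq\cB$. This is possible because the elements of $\mP$ are $(k-\ell)$-dimensional subspaces of $V$. I then consider
\[
\mS_\varphi=\{A\oplus\varphi(A): A\in\cA\}.
\]
As already observed before \cref{lem:dim(S1capS2)}, each member of $\mS_\varphi$ is $k$-dimensional and meets $U$ in dimension exactly $\ell$. Hence $\mS_\varphi\subseteq\Omega_{U,\ell}$ and $|\mS_\varphi|=|\cA|$.

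It remains to check the intersection condition. Take distinct $A_1,A_2\in\cA$. By injectivity, $\varphi(A_1)$ and $\varphi(A_2)$ are distinct elements of the partial spread $\mP$, so they intersect trivially. Then \cref{lem:dim(S1capS2)} gives
\[
\dim(S_1\cap S_2)=\dim(A_1\cap A_2)+0\le t,
\]
where the last inequality holds because $\cA$ has minimum distance at least $2(\ell-t)$, i.e.\ $\dim(A_1\cap A_2)\le t$. Injectivity of $A\mapsto A\oplus\varphi(A)$ is automatic, since intersecting with $U$ recovers $A$.

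Therefore $\mS_\varphi$ is an $(\ell,t)$-intersecting set of size $A_q(u,\ell,2(\ell-t))$, which gives the lower bound and hence equality. There is no real obstacle here. The only point to note is that $\mP$ must be nonempty in the right dimension, which uses the standing assumption $k-\ell\le n-u$ (implicit in the hypothesis).
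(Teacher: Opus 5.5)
Your proposal is correct and follows essentially the same argument as the paper. Both proofs fix an optimal code $\cA$ and assign to its members pairwise distinct elements of the partial spread. Then \cref{lem:dim(S1capS2)} reduces $\dim(S_i\cap S_j)$ to $\dim(A_i\cap A_j)\le t$, and optimality follows from \cref{prop:upper_bound}.
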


\begin{proof}
Let $M:=A_q(u,\ell,2(\ell-t))$, and let $\mathcal A$ be a constant-dimension subspace code of minimum subspace distance $2(\ell-t)$ and size $A_q(u,\ell,2(\ell-t))$. We can order the spaces in $\cA$ as $A_1,\ldots,A_M$.
By assumption, there exist pairwise disjoint subspaces $B_1,\ldots,B_M\in\mP$.
Define the function $\varphi:\cA\to\cB$
$$
\varphi(A_i):=B_i
$$
and the constant-dimension subspace code
$$
\mS_\varphi=\{A_i\oplus B_i : i=1,\dots,M\}.
$$
For every element $S_i=A_i\oplus B_i$, we have $\dim(S_i\cap U)=\dim(A_i)=\ell$. Moreover,
by \cref{lem:dim(S1capS2)}, for every $i\neq j$ we have
$$
\dim(S_i\cap S_j)
=
\dim(A_i\cap A_j)+\dim(B_i\cap B_j)
=
\dim(A_i\cap A_j)
\leq t.
$$
Hence, $\mS_\varphi$ is an $(\ell,t)$-intersecting set with respect to $U$ of cardinality
$A_q(u,\ell,2(\ell-t))$. By \cref{prop:upper_bound}, this cardinality is maximum.
\end{proof}

The construction in \cref{prop:partial_spread_simple} assigns a different element of the
partial spread to each element of $\cA$. This is enough to make all the contributions coming
from the second component disjoint, but it is often too restrictive. We can do better by
allowing different elements of $\cA$ to be completed with the same subspace of $V$.

By \cref{lem:dim(S1capS2)}, this is possible only when the corresponding subspaces of $U$
intersect in a sufficiently small dimension. More precisely, if two distinct elements
$A_1,A_2\in\cA$ are completed with the same $B\in\cB$, then we need $\dim(A_1\cap A_2)+(k-\ell)\leq t$,
or equivalently
$$
\dim(A_1\cap A_2)<\ell+t+1-k.
$$
This is precisely the condition that $A_1$ and $A_2$ are not adjacent in the power $J_q(u,\ell)^{k-t-1}$.

The idea is to color the vertices of $J_q(u,\ell)^{k-t-1}$ and assign the same element of a partial $(k-\ell)$-spread in $V$ to all vertices of the same color. Vertices with different
colors are assigned disjoint subspaces of~$V$. This is formally explained in \cref{prop:chi_small_is_sufficient}.

Notice that, when $\ell+t+1-k\leq 0$, the graph $J_q(u,\ell)^{k-t-1}$ is complete, so every vertex belongs to a different color class.
Thus, in this range, the coloring construction below reduces to the construction from \cref{prop:partial_spread_simple}, since two different elements of $\cA$ are always mapped to disjoint subspaces of $V$.

{\begin{theorem}\label{prop:chi_small_is_sufficient}
Suppose that $V$ contains a partial $(k-\ell)$-spread $\mP$ such that
$|\mP|\geq \chi(J_q(u,\ell)^{k-t-1})$. Then
$$
m_q(n,k,u,\ell,t)=A_q(u,\ell,2(\ell-t)).
$$
\end{theorem}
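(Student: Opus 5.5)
We build an explicit map $\varphi:\cA\to\cB$ of the type described before the statement and check, via \cref{lem:dim(S1capS2)}, that $\mS_\varphi$ is an $(\ell,t)$-intersecting set; the upper bound of \cref{prop:upper_bound} then gives equality. Set $s:=k-t-1$ and $\chi:=\chi(J_q(u,\ell)^{s})$. Take an optimal code $\cA\subseteq\Gr_q(\ell,u)$ with minimum distance $2(\ell-t)$ and $|\cA|=A_q(u,\ell,2(\ell-t))$. Fix a proper coloring $c:\Gr_q(\ell,u)\to[\chi]$ of $J_q(u,\ell)^{s}$. Since $|\mP|\ge\chi$, choose pairwise distinct, hence pairwise trivially intersecting, $P_1,\dots,P_\chi\in\mP$, each of dimension $k-\ell$. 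Define $\varphi(A):=P_{c(A)}$ for $A\in\cA$; this is just the restriction of the coloring to the vertices in $\cA$.

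Next I verify the intersection condition for distinct $A_1,A_2\in\cA$, splitting into two cases according to their colors.

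\emph{Different colors.} Then $\varphi(A_1)\cap\varphi(A_2)=\{0\}$, so \cref{lem:dim(S1capS2)} gives $\dim(S_1\cap S_2)=\dim(A_1\cap A_2)\le t$, by the minimum distance of $\cA$.

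\emph{Same color.} Then $A_1,A_2$ are non-adjacent in $J_q(u,\ell)^{s}$. As recalled in \cref{sec:q-Johnson}, adjacency in the $s$-th power means $\dim(A_1\cap A_2)\ge \ell-s$. So non-adjacency gives $\dim(A_1\cap A_2)\le \ell-s-1=\ell+t-k$. Since $\dim(\varphi(A_1)\cap\varphi(A_2))=k-\ell$, we get
$$\dim(S_1\cap S_2)\le (\ell+t-k)+(k-\ell)=t.$$

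So every pair of distinct codewords meets in dimension at most $t$. Each $S=A\oplus\varphi(A)$ has dimension $k$ and meets $U$ in exactly $A$, of dimension $\ell$. Distinct $A$ give distinct $S$, since $S\cap U=A$, so $|\mS_\varphi|=|\cA|=A_q(u,\ell,2(\ell-t))$. Combined with \cref{prop:upper_bound}, this gives $m_q(n,k,u,\ell,t)=A_q(u,\ell,2(\ell-t))$.

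The only delicate point is the degenerate range where $s$ is at least the diameter $\min\{\ell,u-\ell\}$, or $\ell-s\le 0$. There the power graph is complete, so all colors are distinct and the argument reduces to \cref{prop:partial_spread_simple}. I expect no real obstacle beyond checking that the adjacency translation and the strict inequality $\dim(A_1\cap A_2)<\ell+t+1-k$ are stated with the correct offset.
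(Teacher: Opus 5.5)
Your proof is correct and follows the paper's argument. You color the power graph $J_q(u,\ell)^{k-t-1}$, send each color class to its own element of the partial spread, and check the same-color and different-color cases with \cref{lem:dim(S1capS2)}; the only cosmetic difference is that the paper colors the induced subgraph $J_q(u,\ell)^{k-t-1}[\cA]$ directly, whereas you restrict a coloring of the whole graph to $\cA$, which comes to the same thing.
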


\begin{proof} Fix a constant-dimension subspace code $\cA$ in $\Gr_q(\ell,u)$ of minimum subspace distance $2(\ell-t)$ and cardinality $|\cA|=A_q(u,\ell,2(\ell-t))$.
Consider the subgraph $\Gamma:=J_q(u,\ell)^{k-t-1}[\cA]$ induced from $J_q(u,\ell)^{k-t-1}$ by the vertices in $\cA$. Let the chromatic number be $\chi:=\chi(\Gamma)$, and fix a coloring of $\Gamma$ with $\chi$ colors.
Partition $\cA=\cA_1\cup\cdots\cup\cA_\chi$, where $\cA_j$ is the set of vertices of color $j$.
Since $|\mP|\geq \chi(J_q(u,\ell)^{k-t-1})\geq \chi$, we can choose pairwise disjoint subspaces $B_1,\ldots,B_\chi\in\mP$.
Define $\varphi:\cA\to\cB$ by
$$
\varphi(A):=B_j \qquad \text{if } A\in\cA_j.
$$
Then
$$
\mS_\varphi
=
\bigcup_{j=1}^{\chi}\{A\oplus B_j : A\in\cA_j\}.
$$
Clearly $|\mS_\varphi|=|\cA|=A_q(u,\ell,2(\ell-t))$, and every element
$S\in\mS_\varphi$ satisfies $\dim(S\cap U)=\ell$ so $\cS_\varphi$ is a Schubert subspace code.

We prove that $\mS_\varphi$ is an $(\ell,t)$-intersecting set. Let $S_1=A_1\oplus \varphi(A_1)$ and $S_2=A_2\oplus \varphi(A_2)$ be two distinct elements of $\mS_\varphi$ and we study the dimension of their intersection.

Suppose first that $A_1$ and $A_2$ lie in different color classes. Then
$\varphi(A_1)$ and $\varphi(A_2)$ are distinct elements of the partial spread $\mP$, and hence
$\varphi(A_1)\cap \varphi(A_2)=\{0\}$. By \cref{lem:dim(S1capS2)} we get
$$
\dim(S_1\cap S_2)=\dim(A_1\cap A_2)\leq t.
$$

Suppose now that $A_1$ and $A_2$ lie in the same color class. Then they are not adjacent in
$\Gamma$. Hence,
$$
\dim(A_1\cap A_2)<\ell-(k-t-1)=\ell+t+1-k.
$$
Moreover, $\varphi(A_1)=\varphi(A_2)$ has dimension $k-\ell$. By \cref{lem:dim(S1capS2)},
$$
\dim(S_1\cap S_2)
=
\dim(A_1\cap A_2)+\dim(\varphi(A_1))
<
(\ell+t+1-k)+(k-\ell)
=
t+1.
$$
Therefore, $\dim(S_1\cap S_2)\leq t$. Thus, $\mS_\varphi$ is an
$(\ell,t)$-intersecting set with respect to $U$ of cardinality
$A_q(u,\ell,2(\ell-t))$. By \cref{prop:upper_bound}, this cardinality is maximum.
\end{proof}

\begin{remark}\label{rem:strengthening_chi_subgraph}
    Note that \cref{prop:chi_small_is_sufficient} can be strengthened for $t<\ell-1$ by requiring that there exists a partial $(k-\ell)$-spread $\cP$ in $V$ such that $\chi(\Gamma)\le |\cP|$, $\Gamma=J_q(u,\ell)^{k-t-1}[\cA]$ and $\cA$ is a constant-dimension code in $\Gr_q(\ell,u)$  of minimum subspace distance $2(\ell-t)$ and cardinality $A_q(u,\ell,2(\ell-t))$. However, since the chromatic number of the induced subgraph depends on the construction of $\cA$, we stated the result of \cref{prop:chi_small_is_sufficient} more generally by just considering the chromatic number of the entire graph $J_q(u,\ell)^{k-t-1}$.
    Due to known estimates on the chromatic number of $J_q(u,\ell)^{k-t-1}$, this result can be easier to apply, as we will see in \cref{cor:ell=k-1_} and \cref{cor:asymptotic_general_ell}.
\end{remark}

\begin{corollary}\label{cor:partial_spread_condition}
Let $r$ be the remainder of $n-u$ modulo $k-\ell$. If
$$
\chi(J_q(u,\ell)^{k-t-1})
\leq
\frac{q^{n-u}-q^{k-\ell+r}}{q^{k-\ell}-1}+1,
$$
then
$$
m_q(n,k,u,\ell,t)=A_q(u,\ell,2(\ell-t)).
$$
\end{corollary}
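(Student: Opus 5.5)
The plan is to combine \cref{thm:beutelspacher_partial_spread} with \cref{prop:chi_small_is_sufficient}. No new construction is needed: Beutelspacher's theorem supplies the partial spread in $V$ that \cref{prop:chi_small_is_sufficient} requires.

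First, recall that $V$ is a complement of $U$ in $\Fq^n$, so $\dim V=n-u$ and $V\cong\Fq^{n-u}$. The standing assumptions of this section are $1\le \ell\le k-1$ and $k-\ell\le n-u$. Hence $m:=k-\ell$ satisfies $1\le m\le n-u$, and \cref{thm:beutelspacher_partial_spread} applies with ambient dimension $n-u$ and spread dimension $m$. With $r$ the remainder of $n-u$ modulo $k-\ell$, it gives a partial $(k-\ell)$-spread $\mP$ in $V$ with
$$
|\mP|=\frac{q^{n-u}-q^{k-\ell+r}}{q^{k-\ell}-1}+1 .
$$
Transporting this spread from $\Fq^{n-u}$ to $V$ by any linear isomorphism preserves both the dimensions and the pairwise trivial intersections.

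Second, the hypothesis of the corollary says exactly that $\chi(J_q(u,\ell)^{k-t-1})\le|\mP|$. This is the assumption of \cref{prop:chi_small_is_sufficient}, which then yields $m_q(n,k,u,\ell,t)=A_q(u,\ell,2(\ell-t))$.

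There is no genuine obstacle. The only points to check are that the hypotheses of Beutelspacher's theorem, namely $m\ge1$ and $m\le n-u$, follow from the section's standing assumptions, and that the remainder $r$ in the corollary matches the one in the theorem, which it does by definition.
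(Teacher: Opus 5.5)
Your proposal is correct and matches the paper's proof: apply Beutelspacher's theorem to the $(n-u)$-dimensional complement $V$ with $m=k-\ell$ to get a partial $(k-\ell)$-spread of the stated size, then invoke \cref{prop:chi_small_is_sufficient}. Your explicit check that $1\le k-\ell\le n-u$ follows from the section's standing assumptions is a small addition the paper leaves implicit.
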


\begin{proof}
By \cref{thm:beutelspacher_partial_spread}, the vector space $V$, which has dimension $n-u$,
contains a partial $(k-\ell)$-spread of size
$$
\frac{q^{n-u}-q^{k-\ell+r}}{q^{k-\ell}-1}+1.
$$
Hence, the result follows from \cref{prop:chi_small_is_sufficient}.
\end{proof}

We now use the results from \cref{sec:q-Johnson} to estimate the parameters for which the condition on the chromatic number holds.

Note that in the very special case in which  $\ell=k-1$ and $t=k-2$, we have $k-t-1=1$ and hence the graph we consider in \cref{prop:chi_small_is_sufficient} is simply $J_q(u,\ell)$, whose chromatic number can be better estimated via \cref{thm:bound_JozefienVlad1}. Moreover, in this case, the elements of $\cB$ are the
$1$-dimensional subspaces of $V$, and any two distinct elements of $\cB$ intersect trivially. Hence, we obtain the following result.

\begin{corollary}\label{cor:ell=k-1_}
Suppose $\ell=k-1$. If $u\leq \frac{n}{2}$, then
$$
m_q(n,k,u,k-1,k-2)=\qbin{u}{k-1}{q}.
$$
\end{corollary}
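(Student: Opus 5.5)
Since $\ell=k-1$ and $t=k-2=\ell-1$, the upper bound of \cref{prop:upper_bound} reads $m_q(n,k,u,k-1,k-2)\le \qbin{u}{k-1}{q}$. The plan is to match it with \cref{prop:chi_small_is_sufficient}. Here $k-t-1=1$, so the relevant graph is $J_q(u,k-1)$ itself, and $\cA=\Gr_q(k-1,u)$. Moreover $k-\ell=1$, so the partial $(k-\ell)$-spread we need in $V$ is simply a set of $1$-dimensional subspaces of $V$. Any two distinct points intersect trivially, so the set of all points of $V$ is a partial $1$-spread of size $\qbin{n-u}{1}{q}$. No appeal to \cref{thm:beutelspacher_partial_spread} is needed, although for $m=1$ it gives the same number.

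Next I would bound the chromatic number with \cref{thm:bound_JozefienVlad1}: $\chi(J_q(u,k-1))\le \qbin{u}{1}{q}$. The hypothesis $u\le n/2$ gives $u\le n-u$, and hence $\qbin{u}{1}{q}\le\qbin{n-u}{1}{q}$. Therefore $|\mP|\ge\chi(J_q(u,\ell)^{k-t-1})$, and \cref{prop:chi_small_is_sufficient} yields $m_q(n,k,u,k-1,k-2)=A_q(u,k-1,2)=\qbin{u}{k-1}{q}$.

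The only real obstacle is checking that the standing assumptions of Section~\ref{sec:UV_construction} hold.
\begin{itemize}
\item We need $1\le \ell\le k-1$, i.e. $k\ge 2$. This holds because $t=k-2\ge 0$.
\item We need $k-\ell=1\le n-u$. This follows from $u\le n/2$ together with $n\ge 1$.
\item We need $\ell\le u$, so that $\Gr_q(k-1,u)$ is nonempty. This is implicit in the notation's requirement $\ell\le\min\{k,u\}$.
\end{itemize}
Degenerate cases are harmless: if $u=\ell$, the graph is a single vertex and the argument goes through trivially.
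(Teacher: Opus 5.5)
Your proposal is correct and follows the paper's proof. The paper likewise bounds $\chi(J_q(u,k-1))\le\qbin{u}{1}{q}$ via \cref{thm:bound_JozefienVlad1}, takes all points of $V$ as a partial $1$-spread of size $\qbin{n-u}{1}{q}\ge\qbin{u}{1}{q}$, and applies \cref{prop:chi_small_is_sufficient}. Your explicit check of the standing assumptions ($k\ge 2$, $n-u\ge 1$, $\ell\le u$) is a small addition the paper leaves implicit.
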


\begin{proof}
If $\ell=k-1$, then $k-\ell=1$. By \cref{thm:bound_JozefienVlad1}, we have
$\chi(J_q(u,k-1))\leq \qbin{u}{1}{q}$. Moreover, since $u\leq \frac n2$, we have
$\qbin{u}{1}{q}\leq \qbin{n-u}{1}{q}$. The set of all $1$-dimensional subspaces of $V$ is a
(partial) $1$-spread of size $\qbin{n-u}{1}{q}$. Hence, the claim follows from
\cref{prop:chi_small_is_sufficient}.
\end{proof}

We can also obtain asymptotic sufficient conditions for general $\ell$ by using the bound on the
chromatic number of powers of the $q$-Johnson graph. We first consider the non-complete case,
where the coloring argument can improve on the construction in
\cref{prop:partial_spread_simple}.

\begin{theorem}\label{cor:asymptotic_general_ell}
Assume that $k-t-1<\min\{\ell,u-\ell\}$.
For $q$ large enough, if
$$
(k-t-1)\max\{u-\ell,\ell\}< n-u-(k-\ell),
$$
then
$$
m_q(n,k,u,\ell,t)=A_q(u,\ell,2(\ell-t)).
$$
\end{theorem}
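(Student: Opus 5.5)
We reduce the statement to \cref{cor:partial_spread_condition}: it suffices to check that, for $q$ large enough,
$$
\chi(J_q(u,\ell)^{k-t-1})\le \frac{q^{n-u}-q^{k-\ell+r}}{q^{k-\ell}-1}+1,
$$
where $r$ is the remainder of $n-u$ modulo $k-\ell$. We compare the growth in $q$ of both sides for fixed $n,k,u,\ell,t$.

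For the left-hand side, set $s:=k-t-1$. Since $t\le \ell-1<k-1$, we have $s\ge 1$, and by hypothesis $s<\min\{\ell,u-\ell\}$. Hence \cref{thm:bound_JozefienVlad2} applies with ambient dimension $u$ and subspace dimension $\ell$. It gives
$$
\chi(J_q(u,\ell)^{s})\le (1+o(1))\,u^s\qbin{\max\{\ell,u-\ell\}+s}{s}{q}.
$$
Alternatively, we can invoke \eqref{eq:asymptotic_chromatic} directly, which gives $\chi(J_q(u,\ell)^s)=\Theta(q^{sM})$ with $M:=\max\{\ell,u-\ell\}$. The only point to handle is that $o(1)$ refers to $n\to\infty$. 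Since it does not depend on $q$, for fixed $u$ the factor $(1+o(1))u^s$ is a constant $C$ independent of $q$. Moreover $\qbin{M+s}{s}{q}\le c\,q^{sM}$ for an absolute constant $c$ (for instance $c=4^s$, or any bound on $\prod_i (1-q^{-i})^{-1}$). So the left-hand side is at most $C'q^{sM}$, with $C'$ independent of $q$.

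For the right-hand side, write $m:=k-\ell\ge 1$. Since $m+r\le n-u$, and in fact $m+r<n-u$ whenever $n-u\ge 2m$, the expression is at least roughly $q^{n-u-m}(1-q^{m+r-n+u})$. One has to be careful when $m+r=n-u$, i.e.\ $m\le n-u<2m$. In that case the Beutelspacher bound equals $1$. However, the hypothesis gives $n-u-m>sM\ge 1$, so $n-u\ge m+2$, and $r=n-u-m$ unless $n-u\ge 2m$. In the case $n-u<2m$ I would bypass \cref{thm:beutelspacher_partial_spread} altogether: the right-hand side would be $1$, and the inequality would then force $\chi=1$, which fails. So the case $n-u<2m$ needs $m+r<n-u$. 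This is the main obstacle. I suspect the right-hand side should be lower bounded by something like $q^{n-u-m-r}$ in general, and I would check whether $sM<n-u-m$ combined with $n-u<2m$ is even consistent. Note that $sM\ge 1$ and $n-u-m<m$, so it is consistent. In that subcase I would try to replace the Beutelspacher spread by a better known partial spread lower bound, or else accept that the claim needs $n-u\ge 2m$.

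In the main case $n-u\ge 2m$, we have $n-u-m-r\ge 0$, and more precisely $q^{n-u}-q^{m+r}\ge q^{n-u}(1-q^{-(n-u-m-r)})$. Thus the right-hand side is at least $\tfrac12 q^{n-u-m}$ for $q\ge 2$ whenever $n-u>m+r$. When $r=0$ and $n-u=m\cdot j$, we get a full spread of size $\ge q^{n-u-m}$. Since $sM<n-u-m$ are integers, we have $sM\le n-u-m-1$. Therefore $C'q^{sM}\le \tfrac12 q^{n-u-m}$ as soon as $q\ge 2C'$. Then \cref{cor:partial_spread_condition}, or directly \cref{prop:chi_small_is_sufficient}, yields $m_q(n,k,u,\ell,t)=A_q(u,\ell,2(\ell-t))$.
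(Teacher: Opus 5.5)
Your main-case argument is sound and follows the paper's route. You reduce to \cref{cor:partial_spread_condition} and compare the growth in $q$ of $\chi(J_q(u,\ell)^{k-t-1})$ with Beutelspacher's bound. Your explicit constant $C'$ and the lower bound $\tfrac12 q^{n-u-m}$ are in fact more careful than the paper's $\Theta$-level comparison.

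However, the proof as written is incomplete. You leave the case $m\le n-u<2m$ (with $m=k-\ell$) open and assert that it is consistent with the hypotheses. You then propose either a better partial spread or an extra assumption $n-u\ge 2m$. The first option cannot work: if $n-u<2m$, any two $m$-dimensional subspaces of $V$ meet nontrivially, so every partial $m$-spread in $V$ has at most one element and Beutelspacher's value $1$ is sharp. The second option would prove a weaker statement than the theorem. Your consistency check uses only $sM\ge 1$, which is too weak, and its conclusion is false.

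The missing observation is short. Since $t\le \ell-1$, you have $s=k-t-1\ge k-\ell=m$. The non-completeness hypothesis $s<\min\{\ell,u-\ell\}\le M$ gives $M\ge s+1$. Hence $sM\ge m(m+1)$, and the hypothesis $sM<n-u-m$ forces $n-u>m(m+2)\ge 3m$.

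So $n-u\ge 2m$ always holds under the theorem's assumptions, which gives $n-u-m-r=(j-1)m\ge m\ge 1$ where $n-u=jm+r$. Your main case is therefore the only case, and with this inserted your argument proves the theorem. The paper's proof uses the same fact tacitly when it asserts that Beutelspacher's partial spread has order $q^{n-u-(k-\ell)}$; your worry was legitimate to raise, but it resolves immediately.
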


\begin{proof}
We apply \cref{thm:bound_JozefienVlad2} to the $(k-t-1)$-th power of the $q$-Johnson graph
$J_q(u,\ell)$. In particular, \eqref{eq:asymptotic_chromatic} gives
$$
\chi(J_q(u,\ell)^{k-t-1})=\Theta(q^{(k-t-1)\max\{u-\ell,\ell\}})
$$
for fixed $u,k,\ell$ and $q$ large.

On the other hand, by  \cref{thm:beutelspacher_partial_spread} applied to $V$ and to $m=k-\ell$, we obtain a partial $(k-\ell)$-spread in $V$ of size
$$
\frac{q^{n-u}-q^{k-\ell+r}}{q^{k-\ell}-1}+1,
$$
where $r$ is the remainder of $n-u$ modulo $k-\ell$. This quantity has order
$q^{n-u-(k-\ell)}$.

Therefore, if
$$
(k-t-1)\max\{u-\ell,\ell\}< n-u-(k-\ell),
$$
then the size of the available partial spread is asymptotically larger than the chromatic upper
bound. Hence, for $q$ large enough, there exists a partial $(k-\ell)$-spread $\mP$ in $V$ such
that
$$
|\mP|\geq \chi(J_q(u,\ell)^{k-t-1}).
$$
The result follows from  \cref{cor:partial_spread_condition}.
\end{proof}

\begin{remark}
The assumption $k-t-1<\min\{\ell,u-\ell\}$ in \cref{cor:asymptotic_general_ell} ensures that the graph
$J_q(u,\ell)^{k-t-1}$ is not complete. If $k-t-1\geq \min\{\ell,u-\ell\}$, then
$J_q(u,\ell)^{k-t-1}$ is the complete graph, and therefore
$$
\chi(J_q(u,\ell)^{k-t-1})=\qbin{u}{\ell}{q}.
$$
If this happens, we distinguish two cases:
\begin{enumerate}
    \item If $t=\ell-1$, the coloring construction of \cref{prop:chi_small_is_sufficient} does not
 improve on the construction of \cref{prop:partial_spread_simple}: one needs a
different element of the partial $(k-\ell)$-spread in $V$ for each element of $\cA$.
Consequently, in the complete case, \cref{prop:chi_small_is_sufficient} reduces to the condition
that $V$ contains a partial $(k-\ell)$-spread of size at least $\qbin{u}{\ell}{q}$. 
\item If $t<\ell-1$, \cref{prop:partial_spread_simple} gives a tighter result than \cref{prop:chi_small_is_sufficient}: however, using the stronger considerations of \cref{rem:strengthening_chi_subgraph} about the subgraph $\Gamma=J_q(u,\ell)^{k-t-1}[\cA]$, we have that $\Gamma$ is the complete graph on $A_q(u,\ell,2(\ell-t))$ vertices, and hence  we retrieve again \cref{prop:partial_spread_simple}.
\end{enumerate}

\end{remark}

\begin{remark}
The condition in \cref{cor:asymptotic_general_ell} can be rewritten more explicitly.

Assume first that $J_q(u,\ell)^{k-t-1}$ is not complete.
If $u\geq 2\ell$, then $\max\{u-\ell,\ell\}=u-\ell$. The condition becomes
$(k-t-1)(u-\ell)< n-u-(k-\ell).$
Equivalently,
$(k-t)u<n+\ell(k-t)-k,$
and hence
$$
u<\ell+\frac{n-k}{k-t}.
$$
If $u\leq 2\ell$, then $\max\{u-\ell,\ell\}=\ell$. The condition becomes
$(k-t-1)\ell<n-u-(k-\ell)$,
which is equivalent to
$$
u<n-(k-\ell)-(k-t-1)\ell.
$$

If $J_q(u,\ell)^{k-t-1}$ is complete, then $k-t-1\ge \min\{u-\ell,\ell\}$. In this case, 
$$A_q(u,\ell,2(\ell-t))=\Theta(q^{\max\{u-\ell,\ell\}(\min\{u-\ell,\ell\}-(\ell-t)+1)});$$
see e.g. \cite{heinlein2016tables}. 
Hence, for $q$ large, the sufficient condition becomes
$$
\max\{u-\ell,\ell\}(\min\{u-\ell,\ell\}-(\ell-t)+1)
<
n-u-k+\ell.$$
Equivalently, if $u\le 2\ell$, this condition is
$$
(\ell+1)u < n-k+2\ell^2-\ell t,$$
whereas if $u\ge 2\ell$, it is
$$
u < \ell+\frac{n-k}{t+2}.$$
\end{remark}

The results above give sufficient conditions for the direct-sum construction to attain the upper bound of \cref{prop:upper_bound}.
We now turn to study necessary conditions for constructions of this form.

\subsection{Necessary conditions}\label{sec:necessary_conditions}

As in the previous subsection, recall that we define $\cA\subseteq \Gr_q(\ell,u)$ to be a constant-dimension subspace code of minimum subspace distance $2(\ell-t)$ and size $A_q(u,\ell,2(\ell-t))$, and $\cB=\Gr_q(k-\ell,n-u)$.
Recall also that, given a map $\varphi:\cA\to\cB$, we can associate to it the family $\cS_\varphi=\{A\oplus\varphi(A):A\in\cA\}$.
$\cS_\varphi$ is always a Schubert subspace code of constant dimension $k$ contained in the Schubert variety $\Omega_{U,\ell}$.

The results in \cref{sec:sufficient_conditions} give sufficient conditions for $\cS_\varphi$ to not only be a Schubert subspace code, but also an $(\ell,t)$-intersecting set with optimal cardinality.
More precisely, they show that the upper bound of \cref{prop:upper_bound} is attained whenever the complement $V$ contains a sufficiently large partial $(k-\ell)$-spread, possibly after grouping the elements of $\cA$ according to a coloring of the graph $J_q(u,\ell)^{k-t-1}$.

We now look at the converse direction. Namely, we study necessary conditions for the existence of a map
$\varphi:\cA\to\cB$ giving an optimal $(\ell,t)$-intersecting set of the form
$$
\mS_\varphi=\{A\oplus\varphi(A):A\in\cA\}.
$$
This allows us to understand which constraints must be satisfied when assigning to each
$\ell$-dimensional subspace of $U$ a $(k-\ell)$-dimensional subspace of $V$.

Throughout this subsection, we assume that $\varphi:\cA\to\cB$ is such that $\mS_\varphi$
is an $(\ell,t)$-intersecting set, with $t\leq \ell-1$, and look for necessary conditions between the parameters.

We first study what happens when two spaces in $\cA$ have the same image under $\varphi$.

\begin{lemma}\label{prop:phi_injective}
    If, for some $B\in\cB$, there exist two distinct $A_1,A_2\in\varphi^{-1}(B)$, then
    $$
    \dim(A_1\cap A_2)\leq {\ell+t-k}.
    $$
\end{lemma}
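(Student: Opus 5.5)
The plan is to apply \cref{lem:dim(S1capS2)} directly to the two codewords determined by $A_1$ and $A_2$. Set $S_i:=A_i\oplus\varphi(A_i)=A_i\oplus B$ for $i=1,2$. Both $S_1$ and $S_2$ lie in $\mS_\varphi$.

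First I will check that $S_1\neq S_2$. We have $S_i\cap U=A_i$: since $B\subseteq V$ and $U\cap V=\{0\}$, any $a+b\in U$ with $a\in A_i$ and $b\in B$ forces $b\in U\cap V=\{0\}$. Hence $A_1\neq A_2$ gives $S_1\neq S_2$.

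By the standing assumption of this subsection, $\mS_\varphi$ is an $(\ell,t)$-intersecting set, so $\dim(S_1\cap S_2)\leq t$. On the other hand, \cref{lem:dim(S1capS2)} with $B_1=B_2=B$ gives
$$
\dim(S_1\cap S_2)=\dim(A_1\cap A_2)+\dim(B)=\dim(A_1\cap A_2)+(k-\ell).
$$
Combining the two relations yields $\dim(A_1\cap A_2)\leq t-(k-\ell)=\ell+t-k$, which is the claim.

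There is no real obstacle here. The only point needing care is the distinctness of $S_1$ and $S_2$, which the intersection with $U$ settles.
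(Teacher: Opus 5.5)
Your proof is correct and is essentially the paper's argument: apply \cref{lem:dim(S1capS2)} with $B_1=B_2=B$ and combine it with $\dim(S_1\cap S_2)\le t$. You also check explicitly that $S_1\neq S_2$, using $S_i\cap U=A_i$; the paper leaves this detail implicit.
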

\begin{proof}
    Let $A_1,A_2\in \varphi^{-1}(B)$, with $A_1\neq A_2$. Since
    $\varphi(A_1)=\varphi(A_2)=B$, by \cref{lem:dim(S1capS2)} we have
    $$
    \dim\big((A_1\oplus B)\cap(A_2\oplus B)\big)
    =
    \dim(A_1\cap A_2)+\dim(B).
    $$
    Since $\dim(B)=k-\ell$ and $\mS_\varphi$ is an $(\ell,{t})$-intersecting set, we get
    $$
    \dim(A_1\cap A_2)+k-\ell \leq {t}.
    $$
    Hence, $\dim(A_1\cap A_2)\leq {\ell+t-k}$.
\end{proof}

As an immediate consequence, in the range $k{>\ell+t}$ we get a simple necessary condition.

\begin{corollary}\label{cor:injective_necessary_condition}
Assume that $k {>\ell+t}$.
Then $\varphi$ needs to be injective and
\begin{equation}\label{eq:injective_necessary_condition}
{A_q(u,\ell,2(\ell-t))} \leq \qbin{n-u}{k-\ell}{q}.
\end{equation}
\end{corollary}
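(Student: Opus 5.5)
The plan is to derive injectivity directly from \cref{prop:phi_injective} and then read off the cardinality inequality by comparing domain and codomain sizes. Throughout, we keep the standing assumption of this subsection that $\varphi:\cA\to\cB$ is such that $\mS_\varphi$ is an $(\ell,t)$-intersecting set.

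First we show that $\varphi$ is injective, arguing by contradiction. Suppose there are distinct $A_1,A_2\in\cA$ with $\varphi(A_1)=\varphi(A_2)=B$. Then \cref{prop:phi_injective} gives $\dim(A_1\cap A_2)\leq \ell+t-k$. Under the hypothesis $k>\ell+t$, the right-hand side is strictly negative, while a dimension is always nonnegative. This contradiction shows that no element $B\in\cB$ can have two distinct preimages, so $\varphi$ is injective.

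Next, injectivity of $\varphi:\cA\to\cB$ yields $|\cA|\leq|\cB|$. Here $|\cA|=A_q(u,\ell,2(\ell-t))$ by the choice of $\cA$. Also $\cB=\Gr_q(k-\ell,n-u)$, since $V$ has dimension $n-u$, so $|\cB|=\qbin{n-u}{k-\ell}{q}$. Combining these gives \eqref{eq:injective_necessary_condition}.

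No step is a real obstacle. The only point to check is that the hypothesis $k>\ell+t$ is exactly what makes the bound in \cref{prop:phi_injective} negative, and hence impossible to satisfy.
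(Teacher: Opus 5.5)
Your proof is correct and follows essentially the same argument as the paper. You use \cref{prop:phi_injective} to show that $\varphi(A_1)=\varphi(A_2)$ with $A_1\neq A_2$ would force $\dim(A_1\cap A_2)\le \ell+t-k<0$, and you then deduce $|\cA|\le|\cB|=\qbin{n-u}{k-\ell}{q}$ from injectivity.
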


\begin{proof}
Suppose there exist two distinct spaces $A_1,A_2$ with the same image under $\varphi$.
By \cref{prop:phi_injective} and the assumption $k>\ell+t$, we get 
$$ \dim(A_1\cap A_2) \leq {\ell+t-k}<0, $$
which is impossible as the dimension of the intersection of two subspaces.
Hence, in this case no two distinct elements of $\cA$ can have the same image under $\varphi$, i.e. $\varphi$ is injective, implying that $|\cA|\leq |\cB|$.
Since
$$
|\cA|={A_q(u,\ell,2(\ell-t))}
\qquad\text{and}\qquad
|\cB|=\qbin{n-u}{k-\ell}{q},
$$
the claimed inequality follows.
\end{proof}

By rephrasing \cref{prop:phi_injective} in terms of powers of the Johnson graph, we get the following.

\begin{proposition}\label{cor:indep_johnson}
For every $B\in\cB$, the set $\varphi^{-1}(B)$ is an independent set of
$J_q(u,\ell)^{k-{t-1}}$.
\end{proposition}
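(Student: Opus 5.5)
The plan is to turn the statement into a bound on pairwise intersections and then compare it with \cref{prop:phi_injective}. In $J_q(u,\ell)^s$ two distinct $\ell$-spaces $A_1,A_2$ are adjacent exactly when their geodesic distance $\ell-\dim(A_1\cap A_2)$ is at most $s$. So $\varphi^{-1}(B)$ is independent in $J_q(u,\ell)^{s}$ if and only if
$$\dim(A_1\cap A_2)<\ell-s \quad\text{for all distinct } A_1,A_2\in\varphi^{-1}(B).$$
The only input I would use is \cref{prop:phi_injective}, which gives $\dim(A_1\cap A_2)\le \ell+t-k$ for any two distinct elements of a fibre.

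With the exponent read literally as $s=k-(t-1)=k-t+1$, the condition to prove is $\dim(A_1\cap A_2)<\ell+t-k-1$, that is, $\dim(A_1\cap A_2)\le \ell+t-k-2$. \cref{prop:phi_injective} only gives $\le \ell+t-k$, so the main obstacle is closing this gap of two. The hypotheses do not seem to supply it: by \cref{lem:dim(S1capS2)}, sharing the image $B$ costs exactly $\dim(B)=k-\ell$, so \cref{prop:phi_injective} already uses the intersecting condition in full.

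I therefore expect the literal statement to fail, and I would test this on a small case before trying to prove it. Take $\ell=k-1$, $t=\ell-1=k-2$ (so $k-\ell=1$), and $\ell\ge 2$ so that two $\ell$-spaces of $U$ can meet in dimension $\ell-2$. Choose $A_1,A_2\subseteq U$ with $\dim(A_1\cap A_2)=\ell-2$, and let both have the same image $B$, a point of $V$. By \cref{lem:dim(S1capS2)},
$$\dim\big((A_1\oplus B)\cap(A_2\oplus B)\big)=\ell-1=t,$$
so this pair is permitted in an $(\ell,t)$-intersecting set. The remaining check is that such a pair embeds in a full $\varphi$ on $\cA=\Gr_q(\ell,u)$ for which $\mS_\varphi$ is $(\ell,t)$-intersecting. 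I would do this with a coloring as in \cref{prop:chi_small_is_sufficient}, placing $A_1,A_2$ in the same color class; this is possible because they are nonadjacent in $J_q(u,\ell)^{1}$. Yet their distance $2$ is at most $k-t+1=3$, so they are adjacent in $J_q(u,\ell)^{k-t+1}$. This would refute the exponent $k-(t-1)$.

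What does follow directly from \cref{prop:phi_injective} is the version with the exponent grouped as $k-t-1$. There $\ell-s=\ell+t+1-k$, and $\dim(A_1\cap A_2)\le\ell+t-k<\ell+t+1-k$ gives non-adjacency in one line. This also matches the coloring in \cref{prop:chi_small_is_sufficient}. So my proof would establish the $k-t-1$ form, and for the $k-(t-1)$ reading I would present the example above to show the stated exponent is too large.
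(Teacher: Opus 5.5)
Your argument for the exponent $k-t-1$ is exactly the paper's proof: by \cref{prop:phi_injective}, distinct elements of a fibre satisfy $\dim(A_1\cap A_2)\le \ell+t-k$, while adjacency in $J_q(u,\ell)^{k-t-1}$ requires $\dim(A_1\cap A_2)\ge \ell+t+1-k$. The exponent in the statement really is $k-t-1$: the braces around $t-1$ in the source are only TeX grouping, not parentheses, and this is the same power used in \cref{prop:chi_small_is_sufficient} and \cref{cor:chromatic_necessary_condition}, so your detour through a $k-(t-1)$ reading is unnecessary, although your example does correctly show that the exponent $k-t-1$ cannot in general be increased.
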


\begin{proof}
Let $B\in\cB$ and consider two distinct $A_1,A_2\in \varphi^{-1}(B)$. By
\cref{prop:phi_injective}, we have
$$
\dim(A_1\cap A_2)\leq {\ell+t-k}.
$$
On the other hand, two vertices $A_1,A_2$ of $J_q(u,\ell)^{k-{t-1}}$ are adjacent precisely
when
$$
\dim(A_1\cap A_2)\geq \ell-(k-{t-1})>{\ell+t-k}.
$$
Thus, no two distinct elements of $\varphi^{-1}(B)$ are adjacent in
$J_q(u,\ell)^{k-{t-1}}$. Hence, $\varphi^{-1}(B)$ is an independent set.
\end{proof}

Notice that if $J_q(u,\ell)^{k-{t-1}}$ is the complete graph, then every independent set has size at most one. Hence, in this case, every nonempty preimage $\varphi^{-1}(B)$ contains exactly one
element, and $\varphi$ is injective.

More generally, $\varphi{^{-1}}$ partitions the vertices of $J_q(u,\ell)^{k-{t-1}}$ into independent sets. Therefore, we have the following.

\begin{corollary}\label{cor:chromatic_necessary_condition}
{If the Schubert subspace code $\mS_\varphi$ is an $(\ell,t)$-intersecting set, then
$$
\chi(J_q(u,\ell)^{k-t-1}[\cA])\leq \qbin{n-u}{k-\ell}{q}.
$$}
\end{corollary}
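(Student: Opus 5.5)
The plan is to read off a proper coloring of the induced subgraph $\Gamma:=J_q(u,\ell)^{k-t-1}[\cA]$ directly from the map $\varphi$, and then count the colors it uses.

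Define the coloring $c:\cA\to\cB$ by $c(A):=\varphi(A)$, so the color set is $\cB=\Gr_q(k-\ell,n-u)$. Its color classes are exactly the nonempty fibres $\varphi^{-1}(B)$ with $B\in\cB$.

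We check properness via \cref{cor:indep_johnson}. Each fibre $\varphi^{-1}(B)$ is an independent set of $J_q(u,\ell)^{k-t-1}$. The induced subgraph $\Gamma$ has the same adjacency relation on the vertex set $\cA$. Hence $\varphi^{-1}(B)$ is also independent in $\Gamma$, so any two adjacent vertices of $\Gamma$ receive different colors. The one degenerate case to note is a complete graph, for instance when $k-t-1\ge\min\{\ell,u-\ell\}$. There the fibres are singletons, which is consistent with the argument and with \cref{cor:injective_necessary_condition}.

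Finally we count colors. The coloring uses at most $|\varphi(\cA)|\le|\cB|=\qbin{n-u}{k-\ell}{q}$ colors. By definition of the chromatic number, $\chi(\Gamma)\le\qbin{n-u}{k-\ell}{q}$.

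There is no real obstacle here, since \cref{cor:indep_johnson} already does the work. The only point requiring care is that independence in the full power graph passes to the induced subgraph on $\cA$, which holds because induced subgraphs keep adjacency among the retained vertices.
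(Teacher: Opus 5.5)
Your proposal is correct and is essentially the paper's own proof. By \cref{cor:indep_johnson}, the nonempty fibres $\varphi^{-1}(B)$ are independent sets, so they stay independent in the induced subgraph on $\cA$ and give a proper coloring with at most $|\cB|=\qbin{n-u}{k-\ell}{q}$ colors.
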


\begin{proof}
By \cref{cor:indep_johnson}, for every $B\in\cB$, the preimage $\varphi^{-1}(B)$ is an
independent set of $J_q(u,\ell)^{k-t-1}$ and hence of $J_q(u,\ell)^{k-t-1}[\cA]$.
Thus, the nonempty preimages of elements of $\cB$ define a proper coloring of the graph $J_q(u,\ell)^{k-t-1}[\cA]$.
The number of such preimages is at most $|\cB|$, and therefore
$$
\chi\big(J_q(u,\ell)^{k-t-1}[\cA]\big)
\leq
|\cB|
=
\qbin{n-u}{k-\ell}{q}.
$$
\end{proof}

This condition is the natural converse to the coloring method used in
\cref{prop:chi_small_is_sufficient}.
There, one starts with a coloring of $J_q(u,\ell)^{k-t-1}$ and assigns a disjoint element of a partial $(k-\ell)$-spread in $V$ to each color.
{ As pointed out in \cref{rem:strengthening_chi_subgraph}, it is sufficient that the partial spread is larger than the chromatic number of the subgraph of $J_q(u,\ell)^{k-t-1}$ induced by $\cA$ instead of the whole graph.}
Conversely, any construction of the form $\mS_\varphi$ necessarily induces a coloring of $J_q(u,\ell)^{k-{t-1}}{[\cA]}$ by grouping together all elements of $\cA$ with the same image under $\varphi$. In the special case $t=\ell-1$, we have that $J_q(u,\ell)^{k-t-1}[\cA]=J_q(u,\ell)^{k-\ell}$ so the condition in \cref{cor:chromatic_necessary_condition} can be asymptotically studied, leading to the following result.

\begin{corollary}\label{cor:necess_t=l-1}
Assume that {$t=\ell-1$} and $k-\ell<\min\{\ell,u-\ell\}$. If $\mS_\varphi$ is an
$(\ell,\ell-1)$-intersecting set, then, for $q\to\infty$, we must have  
$$
\begin{cases}
u \leq n-k & \textnormal{if } u\leq 2\ell,\\[1mm]
u \leq \dfrac{2\ell+n-k}{2} & \textnormal{if } u>2\ell.
\end{cases}
$$
\end{corollary}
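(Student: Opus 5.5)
The plan is to combine the necessary condition of \cref{cor:chromatic_necessary_condition} with the lower bound on the chromatic number from \cref{thm:bound_JozefienVlad2}, and then compare the two sides as powers of $q$.

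First we specialize the setting. Since $t=\ell-1$, we have $\cA=\Gr_q(\ell,u)$ and $k-t-1=k-\ell$, so the induced subgraph $J_q(u,\ell)^{k-t-1}[\cA]$ is the whole graph $J_q(u,\ell)^{k-\ell}$. If $\mS_\varphi$ is an $(\ell,\ell-1)$-intersecting set, \cref{cor:chromatic_necessary_condition} then gives
$$
\chi\big(J_q(u,\ell)^{k-\ell}\big)\leq \qbin{n-u}{k-\ell}{q}.
$$

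Next we bound the left-hand side from below. Set $s:=k-\ell$. The hypothesis $1\le s<\min\{\ell,u-\ell\}$ is exactly what is needed to apply the lower bound of \cref{thm:bound_JozefienVlad2} with the parameters $(u,\ell)$. This yields
$$
\qbin{\max\{\ell,u-\ell\}+s}{s}{q}\leq \qbin{n-u}{s}{q}.
$$

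Then we pass to asymptotics in $q$, with all other parameters fixed. We use the standard estimate $\qbin{a}{b}{q}=q^{b(a-b)}(1+o(1))$ as $q\to\infty$. The left-hand side has order $q^{s\max\{\ell,u-\ell\}}$ and the right-hand side has order $q^{s(n-u-s)}$. Suppose $s\max\{\ell,u-\ell\}>s(n-u-s)$. Then the ratio of the left side to the right side grows at least like $q(1+o(1))$, so the inequality fails for $q$ large. Since $s\ge1$, the inequality must therefore satisfy
$$
\max\{\ell,u-\ell\}\leq n-u-(k-\ell).
$$
Strictly speaking, this is what "for $q\to\infty$" in the statement means: the condition must hold for all sufficiently large $q$.

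Finally we split into cases. If $u\le 2\ell$, the maximum is $\ell$, and the condition becomes $\ell\le n-u-k+\ell$, that is, $u\le n-k$. If $u>2\ell$, the maximum is $u-\ell$, and the condition becomes $u-\ell\le n-u-k+\ell$, that is, $2u\le 2\ell+n-k$. These are exactly the two cases of the statement.

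The only delicate point is the precise asymptotic comparison: both Gaussian binomials must have leading exponents matching the ones used above, and equality of exponents is allowed. I expect this to be routine using the exact lower bound $\qbin{a}{b}{q}\ge q^{b(a-b)}$ and the upper bound $\qbin{a}{b}{q}\le c\,q^{b(a-b)}$, where $c$ is a constant independent of $q$ for $q\ge2$.
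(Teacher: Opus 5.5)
Your proof is correct and follows essentially the same route as the paper. Like the paper, you combine \cref{cor:chromatic_necessary_condition} with the growth rate of $\chi(J_q(u,\ell)^{k-\ell})$, compare the exponents $(k-\ell)\max\{u-\ell,\ell\}$ and $(k-\ell)(n-u-(k-\ell))$, and then split into the two cases. The only difference is that you use just the explicit lower bound in \cref{thm:bound_JozefienVlad2} instead of the full $\Theta$-estimate, and the lower bound is all a necessary condition needs.
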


\begin{proof}
By \cref{cor:chromatic_necessary_condition}, we have
$$
\chi\big(J_q(u,\ell)^{k-\ell}\big)\leq \qbin{n-u}{k-\ell}{q}.
$$
Since $k-\ell<\min\{\ell,u-\ell\}$, the graph $J_q(u,\ell)^{k-\ell}$ is not complete, and we can use the asymptotic estimate \eqref{eq:asymptotic_chromatic} from \cite{dhaeseleer2026chromatic} for the chromatic number of powers of $q$-Johnson graphs.
Thus, for fixed $n,k,u,\ell,t$ and $q\to\infty$, the left-hand side has order
$$
q^{(k-\ell)\max\{u-\ell,\ell\}}.
$$
On the other hand, the right-hand side has order
$$
q^{(k-\ell)(n-u-(k-\ell))}.
$$
Therefore, we must have
$$
(k-\ell)\max\{u-\ell,\ell\}
\leq
(k-\ell)(n-u-(k-\ell)).
$$

Since $k-\ell>0$, this is equivalent to
$$
\max\{u-\ell,\ell\}\leq n-u-k+\ell.
$$
If $u\leq 2\ell$, then $\max\{u-\ell,\ell\}=\ell$. Hence,
$\ell\leq n-u-k+\ell$, and so $u\leq n-k$.\\
If $u>2\ell$, then
$\max\{u-\ell,\ell\}=u-\ell$. Hence, $u-\ell\leq n-u-k+\ell$, and so
$2u\leq n-k+2\ell$, that is,
$$
u\leq \frac{2\ell+n-k}{2}.
$$
\end{proof}

\begin{remark}{If $k-t-1\geq \min\{\ell,u-\ell\}$, then the graph $J_q(u,\ell)^{k-t-1}$ is complete and hence also $J_q(u,\ell)^{k-t-1}[\cA]$ is complete. In
this case
$$
\chi(J_q(u,\ell)^{k-t-1}[\cA])=A_q(u,\ell,2(\ell-t)),
$$
and the necessary condition from \cref{cor:chromatic_necessary_condition} becomes
$$
A_q(u,\ell,2(\ell-t))\leq \qbin{n-u}{k-\ell}{q}.
$$
This is consistent with the injectivity of $\varphi$: in the complete case, no two distinct
elements of $\cA$ can have the same image under $\varphi$.}
\end{remark}

We can also obtain necessary conditions from \emph{cliques}. Recall that a \textbf{clique} in a graph is a set of vertices which are pairwise adjacent. We denote by $\omega(G)$ the clique number of a graph $G$, namely the maximum size of a clique in $G$.

In the following result, we prove that any clique in {the subgraph of} $J_q(u,\ell)^{\ell-t}$ {induced by $\cA$} corresponds, via $\varphi$, to a partial $(k-\ell)$-spread of $V$.
\begin{proposition}\label{prop:clique<spread}
    Let $\mS_\varphi$ be a Schubert subspace code that is an $(\ell,t)$-intersecting set. Then, there exists a partial $(k-\ell)$-spread $\cP$ of $V$ such that
    $$\omega(J_q(u,\ell)^{\ell-t}[\cA])= |\cP|.$$
\end{proposition}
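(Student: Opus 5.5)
The plan is to take a maximum clique in the induced graph $J_q(u,\ell)^{\ell-t}[\cA]$ and show that $\varphi$ maps it injectively onto a family of pairwise trivially intersecting $(k-\ell)$-subspaces of $V$. That family is then the required partial spread $\cP$, and its size equals the clique number.

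First I make adjacency in this graph explicit. Two distinct vertices $A_1,A_2\in\cA$ are adjacent in $J_q(u,\ell)^{\ell-t}$ if and only if their distance is at most $\ell-t$, that is, $\dim(A_1\cap A_2)\geq \ell-(\ell-t)=t$. On the other hand, $\cA$ has minimum subspace distance $2(\ell-t)$, so $\dim(A_1\cap A_2)\le t$ for all distinct $A_1,A_2\in\cA$. Hence, inside $\cA$, adjacency means exactly $\dim(A_1\cap A_2)=t$.

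Next, let $\cK\subseteq\cA$ be a clique with $|\cK|=\omega(J_q(u,\ell)^{\ell-t}[\cA])$. Take distinct $A_1,A_2\in\cK$. Since $\mS_\varphi$ is an $(\ell,t)$-intersecting set, \cref{lem:dim(S1capS2)} gives
$$t+\dim(\varphi(A_1)\cap\varphi(A_2))=\dim\big((A_1\oplus\varphi(A_1))\cap(A_2\oplus\varphi(A_2))\big)\le t.$$
Therefore $\varphi(A_1)\cap\varphi(A_2)=\{0\}$.

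Finally, since $\ell\le k-1$ is assumed throughout this section, every element of $\cB$ has positive dimension $k-\ell$. Two such subspaces that intersect trivially must therefore be distinct, so $\varphi$ restricted to $\cK$ is injective. I then set $\cP:=\varphi(\cK)$. This is a family of pairwise trivially intersecting $(k-\ell)$-subspaces of $V$, i.e.\ a partial $(k-\ell)$-spread, with $|\cP|=|\cK|=\omega(J_q(u,\ell)^{\ell-t}[\cA])$.

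I do not expect a genuine obstacle. The one step needing care is the identification of adjacency with $\dim(A_1\cap A_2)=t$: it uses that adjacency in the power graph is an inequality, and that the minimum distance of $\cA$ supplies the reverse inequality. Everything else follows directly from \cref{lem:dim(S1capS2)}.
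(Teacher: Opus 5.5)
Your proposal is correct and follows essentially the same argument as the paper. Both take a maximum clique, use adjacency in $J_q(u,\ell)^{\ell-t}$ to get $\dim(A_1\cap A_2)\ge t$, and apply \cref{lem:dim(S1capS2)} to force $\varphi(A_1)\cap\varphi(A_2)=\{0\}$. You sharpen adjacency to $\dim(A_1\cap A_2)=t$ using the minimum distance of $\cA$, and you spell out why $\varphi$ is injective on the clique (positive dimension $k-\ell$); the paper leaves this step implicit when it writes $|\cK|=|\cP|$.
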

\begin{proof}
Let $\mathcal{K}$ be a clique of maximal size in $J_q(u,\ell)^{\ell-t}[\cA]$. Thus, $|\mathcal{K}|=\omega\big(J_q(u,\ell)^{\ell-t}[\cA]\big)$.
For every $A\in \mathcal{K}$, the subspace $\varphi(A)$ is contained in $V$ and has dimension $k-\ell$.
If $A_1, A_2 \in \cK$ are distinct, then they are adjacent in $J_q(u,\ell)^{\ell-t}$, and hence
$\dim(A_1\cap A_2)\ge t$. Since $\mS_\varphi$ is an $(\ell,t)$-intersecting set with $t\leq\ell-1$, \cref{lem:dim(S1capS2)} gives
\begin{align*}
\dim\big(\varphi(A_1)\cap\varphi(A_2)\big)
&=
\dim\big((A_1\oplus\varphi(A_1))\cap(A_2\oplus\varphi(A_2))\big) - \dim(A_1\cap A_2) \\
&\leq t- t = 0.
\end{align*}
Hence, $\varphi(A_1)\cap\varphi(A_2)=\{0\}$ and 
$\cP:=\{\varphi(A) : A \in\cK\}$ is a partial $(k-\ell)$-spread of $V$. Therefore,
$$
\omega\big(J_q(u,\ell)^{\ell-t}[\cA]\big)=|\cK|=|\cP|,
$$
which proves the claim.
\end{proof}

In general, determining the cliques of the subgraph $J_q(u,\ell)^{\ell-t}[\cA]$ seems a difficult problem, since it depends on the structure and the distance distribution of the subspace code $\cA$. However, in the special case $t=\ell-1$, the subspace code $\cA$ is the whole Grassmannian $\Gr_q(\ell, u)$, and the graph we are considering is $J_q(u,\ell)$ without taking its powers. 
The maximal cliques in $J_q(u,\ell)$ are known, see e.g.~\cite[Sec.~9.3]{brouwer1989distance}.
They are of one of the following two types:
\begin{enumerate}[label=(\roman*)]
    \item collections of all $\ell$-dimensional subspaces of $U$ containing a
fixed $(\ell-1)$-subspace of $U$, also known as \textit{sunflowers}. These cliques have size $$
\qbin{u-\ell+1}{1}{q};
$$
    \item  collections of all
$\ell$-dimensional subspaces of $U$ contained in a fixed $(\ell+1)$-subspace of $U$.
These cliques have size
$$
\qbin{\ell+1}{1}{q}.
$$
\end{enumerate}
By using these two families, we obtain the following asymptotic estimate.

\begin{proposition}\label{prop:clique_upper_cond}
If $t=\ell-1$ and the Schubert subspace code $\mS_\varphi$ is an $(\ell,\ell-1)$-intersecting set, then, for $q\to\infty$, it must hold that
$$
u\leq \min\left\{n-k,\frac{2\ell+n-k}{2}\right\}.
$$
\end{proposition}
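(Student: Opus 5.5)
The plan is to combine \cref{prop:clique<spread} with the two families of maximal cliques of $J_q(u,\ell)$ listed just above, and to compare their sizes with the largest possible partial $(k-\ell)$-spread in $V$. When $t=\ell-1$ we have $\cA=\Gr_q(\ell,u)$ and $J_q(u,\ell)^{\ell-t}[\cA]=J_q(u,\ell)$. So \cref{prop:clique<spread} gives a partial $(k-\ell)$-spread of $V$ whose size equals $\omega(J_q(u,\ell))$. In particular, each of the two clique types in $J_q(u,\ell)$ yields, via $\varphi$, a partial $(k-\ell)$-spread in $V$ of at least the same size. Concretely, for a sunflower $\cK$ the images $\varphi(A)$, $A\in\cK$, are pairwise trivially intersecting; the same holds for the family of $\ell$-subspaces of a fixed $(\ell+1)$-space. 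The argument is the computation in the proof of \cref{prop:clique<spread}, since $\dim(A_1\cap A_2)=\ell-1=t$ for two distinct elements of either clique.

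Next I bound partial spreads in $V$ by counting points. Pairwise trivially intersecting $(k-\ell)$-subspaces of the $(n-u)$-dimensional space $V$ have disjoint sets of nonzero vectors, so
$$|\cP|\leq \frac{q^{n-u}-1}{q^{k-\ell}-1},$$
which is of order $q^{n-u-(k-\ell)}$ as $q\to\infty$. The sunflower clique has size $\qbin{u-\ell+1}{1}{q}$, of order $q^{u-\ell}$, and the second clique has size $\qbin{\ell+1}{1}{q}$, of order $q^{\ell}$. For fixed parameters and $q\to\infty$, both inequalities $q^{u-\ell}\lesssim q^{n-u-k+\ell}$ and $q^{\ell}\lesssim q^{n-u-k+\ell}$ must therefore hold. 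This gives
$$u-\ell\leq n-u-k+\ell \quad\text{and}\quad \ell\leq n-u-k+\ell,$$
that is, $u\leq \frac{2\ell+n-k}{2}$ and $u\leq n-k$. Combining the two gives the minimum in the statement.

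The only delicate point is turning the asymptotic comparison into exact exponent inequalities. The clique sizes and the spread bound are ratios of polynomials in $q$ with leading coefficient $1$. If an exponent on the left strictly exceeded the one on the right, the left side would eventually dominate for large $q$, giving a contradiction. This is the same style of reasoning as in \cref{cor:necess_t=l-1}, so it needs no new tools. I will also note that, unlike \cref{cor:necess_t=l-1}, no assumption such as $k-\ell<\min\{\ell,u-\ell\}$ is needed, because only clique sizes and the trivial spread bound are used, not chromatic estimates. Both conditions hold simultaneously rather than split by cases on $u$ versus $2\ell$.
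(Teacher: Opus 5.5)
Your proposal is correct and follows the paper's proof essentially verbatim: \cref{prop:clique<spread} turns the sunflower cliques and the cliques of $\ell$-subspaces of a fixed $(\ell+1)$-space into partial $(k-\ell)$-spreads of $V$, and these are compared with the point-counting bound $\frac{q^{n-u}-1}{q^{k-\ell}-1}$ by comparing exponents of $q$. Like the paper, you tacitly assume $u\geq \ell+1$, which is needed for the second clique family to exist and is exactly what yields $u\leq n-k$; in the degenerate case $u=\ell$, $\mS_\varphi$ has a single codeword and $u\leq n-k$ can fail (e.g.\ $n=4$, $k=3$, $u=\ell=2$).
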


\begin{proof}
By the discussion above, since $t=\ell-1$, the induced subgraph $J_q(u,\ell)[\cA]$ is the whole $q$-Johnson graph. Hence, by \cref{prop:clique<spread}, any clique in $J_q(u,\ell)$ gives rise, via $\varphi$, to a partial $(k-\ell)$-spread in $V$.
In particular, the cardinality of any clique in $J_q(u,\ell)$ must be at most the cardinality of a partial $(k-\ell)$-spread in $V$.

The two families of maximal cliques described above have cardinalities respectively
$$
\qbin{u-\ell+1}{1}{q},
\qquad\textnormal{and}\qquad
\qbin{\ell+1}{1}{q}.
$$
On the other hand, since each nonzero vector can belong to at most one space, a partial $(k-\ell)$-spread in $V$ has cardinality at most
$$
\frac{q^{n-u}-1}{q^{k-\ell}-1}.
$$
Therefore, we must have
$$
\qbin{u-\ell+1}{1}{q}
\leq
\frac{q^{n-u}-1}{q^{k-\ell}-1}
\quad \text{and}\quad
\qbin{\ell+1}{1}{q}
\leq
\frac{q^{n-u}-1}{q^{k-\ell}-1}.
$$
For $q\to\infty$, these two inequalities imply respectively
$$
u-\ell \leq n-u-k+\ell
\quad \text{and}\quad
\ell\leq n-u-k+\ell.
$$
Equivalently, we get
$$
u\leq \frac{2\ell+n-k}{2}
\quad \text{and}\quad
u\leq n-k.
$$
Hence,
$$
u\leq \min\left\{n-k,\frac{2\ell+n-k}{2}\right\}.
$$
\end{proof}

\begin{remark}
The chromatic condition from \cref{cor:chromatic_necessary_condition}
$$
\chi\big(J_q(u,\ell)^{k-\ell}\big)\leq \qbin{n-u}{k-\ell}{q}
$$
and the condition from \cref{prop:clique_upper_cond} use different consequences of the
assumption that $\mS_\varphi$ is an $(\ell,\ell-1)$-intersecting set.
The first condition only uses the fibers of $\varphi$. Indeed, each fiber must be an independent
set in $J_q(u,\ell)^{k-\ell}$, and therefore the fibers of $\varphi$ induce a coloring of this
graph with at most $\qbin{n-u}{k-\ell}{q}$ colors.
The second condition uses the stronger fact that adjacent vertices of $J_q(u,\ell)$ must be
mapped to pairwise disjoint elements of $\cB$. Thus, every clique in $J_q(u,\ell)$ gives rise,
via $\varphi$, to a partial $(k-\ell)$-spread inside $V$.
In the non-complete range $k-\ell<\min\{\ell,u-\ell\}$, the first condition gives the bound in \cref{cor:necess_t=l-1}. The second condition gives the single bound
$$
u\leq \min\left\{n-k,\frac{2\ell+n-k}{2}\right\},
$$
which implies both pieces of \cref{cor:necess_t=l-1}. In this sense, the condition from
\cref{prop:clique_upper_cond} is stronger, although it follows from a different consequence of
the same intersection requirement.
\end{remark}

\subsection{Comparison between necessary and sufficient conditions}
We now compare the sufficient conditions obtained in Section~\ref{sec:sufficient_conditions} with the necessary conditions obtained in Section~\ref{sec:necessary_conditions}.

The sufficient condition in \cref{prop:chi_small_is_sufficient} requires a partial $(k-\ell)$-spread in $V$ whose cardinality is at least $\chi\big(J_q(u,\ell)^{k-t-1}\big)$.
Using Beutelspacher's construction of partial spreads of \cref{thm:beutelspacher_partial_spread} together with the asymptotic estimate for the chromatic number of powers of Grassmann graphs from \cite{dhaeseleer2026chromatic}, \cref{cor:asymptotic_general_ell} gives, in the non-complete range of the power graph, the sufficient condition
$$
(k-t-1)\max\{u-\ell,\ell\}< n-u-(k-\ell).
$$

In order to explicitly compare this with the necessary conditions, we assume that $t=\ell-1$, so that $\cA=\Gr_q(\ell,u)$ and the relevant power of the $q$-Johnson graph is $J_q(u,\ell)^{k-\ell}$. In this case, assume first that we are in the non-complete range
$$
k-\ell<\min\{\ell,u-\ell\}.
$$
Then the sufficient condition above becomes
$$
(k-\ell)\max\{u-\ell,\ell\}< n-u-(k-\ell).
$$
On the other hand, \cref{cor:necess_t=l-1} gives, again in the non-complete range, the necessary condition
$$
\max\{u-\ell,\ell\}\leq n-u-(k-\ell).
$$
Thus, there is a factor $k-\ell$ on the left-hand side of the sufficient condition. This gap
comes from the fact that, in \cref{prop:chi_small_is_sufficient}, different color classes are assigned to pairwise disjoint $(k-\ell)$-dimensional subspaces of $V$. Hence, the number of
colors must be bounded by the size of a partial $(k-\ell)$-spread in $V$, whose asymptotic order
is $q^{n-u-(k-\ell)}$. The necessary condition from
\cref{cor:chromatic_necessary_condition}, instead, only uses the fact that $\varphi$ takes
values in the whole set $\cB$, whose asymptotic order is
$q^{(k-\ell)(n-u-(k-\ell))}$. This explains the factor $k-\ell$ appearing in the comparison.

In the complete range, the coloring-based argument gives no improvement over the injective
assignment in \cref{prop:partial_spread_simple}. Indeed, if $J_q(u,\ell)^{k-\ell}$ is complete,
then
$$
\chi\big(J_q(u,\ell)^{k-\ell}\big)=\qbin{u}{\ell}{q},
$$
and \cref{prop:chi_small_is_sufficient} reduces to asking for a partial
$(k-\ell)$-spread in $V$ of size at least $\qbin{u}{\ell}{q}$.

The necessary condition obtained from cliques in \cref{prop:clique_upper_cond} is
closer in spirit to the condition used in \cref{prop:chi_small_is_sufficient}. Indeed,
it uses the fact that adjacent vertices of $J_q(u,\ell)$ must be mapped by $\varphi$ to
pairwise disjoint elements of $\cB$. Therefore, large cliques in $J_q(u,\ell)$ force the
existence of large partial $(k-\ell)$-spreads inside $V$. This yields the necessary condition
$$
u\leq \min\left\{n-k,\frac{2\ell+n-k}{2}\right\}.
$$
In general, this condition is independent of the condition coming from the induced coloring:
the latter counts how many different values of $\varphi$ are available, while the condition
obtained from cliques detects when some of these values must be mutually disjoint.

The following example illustrates the gap between the sufficient and necessary conditions.

\begin{example}
Let $u=6$, $\ell=3$ and $k=5$. Then $k-\ell=2$, and the graph appearing in
\cref{prop:chi_small_is_sufficient} is $J_q(6,3)^2$. Equivalently, this is the graph
whose vertices are the $3$-dimensional subspaces of $U$, with two vertices adjacent whenever
they intersect in dimension at least $1$.

The construction of \cite{dhaeseleer2026chromatic} gives a proper coloring of $J_q(6,3)^2$ with $10q^6$ colors.
Hence, by the sufficient condition of \cref{prop:chi_small_is_sufficient}, it is enough to find a partial $2$-spread in $V$ of size at least $10q^6$.
By Beutelspacher's construction of \cref{thm:beutelspacher_partial_spread}, there exists a partial $2$-spread of size
$$
\frac{q^{n-6}-q^{2+r}}{q^2-1}+1,
$$
where $r$ is the remainder of $n-6$ modulo $2$.
Therefore, the sufficient condition is
$$
\frac{q^{n-6}-q^{2+r}}{q^2-1}+1\geq 10q^6.
$$
Direct computations show that, for even $n$, it holds for all $q\geq 2$ and all even $n\geq 16$, except for $(q,n)=(2,16)$. Moreover, for odd $n$, it holds for all
$q\geq 2$ and all odd $n\geq 17$, and  for $n=15$ and $q\geq 11$.

The necessary condition from \cref{cor:chromatic_necessary_condition} is
$$
\chi\big(J_q(6,3)^2\big)\leq \qbin{n-6}{2}{q}.
$$
Since $\chi\big(J_q(6,3)^2\big)$ has order $q^6$, while
$\qbin{n-6}{2}{q}$ has order $q^{2(n-8)}$, this condition is satisfied, asymptotically in $q$, whenever $6< 2(n-8)$. 
That is, as stated also in \cref{cor:necess_t=l-1}, whenever $n> 11$.

This example shows that the gap between the necessary and sufficient conditions comes from the
difference between the number of available elements of $\cB$ and the number of pairwise
disjoint elements of $\cB$. Indeed,
$$
|\cB|=\qbin{n-6}{2}{q}
$$
has order $q^{2(n-8)}$, whereas the partial $2$-spread in $V$ from \cref{thm:beutelspacher_partial_spread} has order only $q^{n-8}$.
The condition from \cref{cor:necess_t=l-1} compares the chromatic number, of order
$q^6$, with $q^{2(n-8)}$, while \cref{prop:chi_small_is_sufficient} compares the same chromatic number with $q^{n-8}$.
\end{example}

\section{Constructions from evasive and scattered subspaces}\label{sec:scattered_generalized}

In this section, we describe a second family of constructions of Schubert subspace codes,
obtained via field reduction from subspaces over an extension field.
The construction is naturally phrased in terms of evasive subspaces, while $h$-scattered subspaces provide a particularly clean case in which the cardinality can be computed exactly.
For $h=1$, this recovers the construction from scattered $\Fq$-subspaces given in \cite{alfarano2024schubert} for the case $\ell=1$ and $t=0$.

Let $r,k$ be positive integers.
We identify $\F_{q^k}^r$ with $\Fq^{rk}$ via the \textit{field-reduction map} $\psi:\F_{q^k}^r\longrightarrow \Fq^{rk}$, which expands coordinates with respect to a basis of $\bF_{q^k}$ over $\bF_q$.
Thus, every $s$-dimensional $\F_{q^k}$-subspace of $\F_{q^k}^r$ gives rise to an $sk$-dimensional $\Fq$-subspace of $\Fq^{rk}$.

Throughout this section, let $\cU$ be a $u$-dimensional $\Fq$-subspace of $\F_{q^k}^r$ and denote by $$U:=\psi(\cU)$$ the \textit{field-reduced} space of $\cU$.
We first recall the definition of \textit{evasive} and \textit{$h$-scattered} spaces; see \cite{blokhuis2000scattered,bartoli2021evasive,csajbok2021generalising}.

\begin{definition}
    Let $a<r$ and let $b$ be a positive integer.
    The subspace $\cU$ is called \textbf{$(a,b)$-evasive} if $\dim_{\F_{q^k}}\langle \cU\rangle_{\F_{q^k}}=r$ and, for every $a$-dimensional $\F_{q^k}$-subspace $\cW$ of $\F_{q^k}^r$, one has $\dim_{\Fq}(\cU\cap \cW)\leq b$.
    For $1\leq h<r$, an $(h,h)$-evasive space $\cU$ is called \textbf{$h$-scattered}.
\end{definition}

For $1\leq \ell<r$, consider
$$
\Gr_{q^k}(\ell,r):=
\{A\subseteq_{\F_{q^k}}\F_{q^k}^r:\dim_{\F_{q^k}}(A)=\ell\}.
$$
We define
$$
\Lambda_{\ell,\cU}:=
\{A\in\Gr_{q^k}(\ell,r):\dim_{\Fq}(A\cap\cU)=\ell\},
\qquad
\mS_{\ell,U}:=\{\psi(A):A\in\Lambda_{\ell,\cU}\}.
$$
The elements of $\Lambda_{\ell,\cU}$ are the $\ell$-dimensional $\F_{q^k}$-subspaces whose intersection with $\cU$ has dimension exactly $\ell$ over $\Fq$.
If $\cU$ is $\ell$-scattered, then the condition $\dim_{\Fq}(A\cap\cU)=\ell$ may equivalently be replaced by $\dim_{\Fq}(A\cap\cU)\geq \ell$.

\begin{proposition}\label{prop:field_reduction_general}
The set $\mS_{\ell,U}$ is an $(\ell,(\ell-1)k)$-intersecting set with respect to $U$ in
$\Gr_q(\ell k,rk)$. Moreover, $|\mS_{\ell,U}|=|\Lambda_{\ell,\cU}|$.
\end{proposition}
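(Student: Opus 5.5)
We need to verify three things: every element of $\mS_{\ell,U}$ has dimension $\ell k$, meets $U$ in dimension at least $\ell$, and any two distinct elements meet in dimension at most $(\ell-1)k$. We also need $|\mS_{\ell,U}|=|\Lambda_{\ell,\cU}|$. Everything rests on standard properties of the field-reduction map $\psi$. It is an injective $\Fq$-linear isomorphism $\F_{q^k}^r\to\Fq^{rk}$. Hence it preserves $\Fq$-dimensions, intersections and sums: $\psi(X\cap Y)=\psi(X)\cap\psi(Y)$ for any $\Fq$-subspaces $X,Y$.

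\emph{Dimension and Schubert condition.} Let $A\in\Lambda_{\ell,\cU}$. Since $\dim_{\F_{q^k}}(A)=\ell$, we have $\dim_{\Fq}(A)=\ell k$, and therefore $\dim_{\Fq}\psi(A)=\ell k$, so $\psi(A)\in\Gr_q(\ell k,rk)$. Next,
\[
\psi(A)\cap U=\psi(A)\cap\psi(\cU)=\psi(A\cap\cU),
\]
and $A\cap\cU$ has $\Fq$-dimension $\ell$. Thus $\dim(\psi(A)\cap U)=\ell$, and $\psi(A)\in\Omega_{U,\ell}$.

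\emph{Pairwise intersections.} Let $A_1\neq A_2$ be elements of $\Lambda_{\ell,\cU}$. The intersection $A_1\cap A_2$ is an $\F_{q^k}$-subspace properly contained in the $\ell$-dimensional space $A_1$. Hence $\dim_{\F_{q^k}}(A_1\cap A_2)\le\ell-1$, which gives $\dim_{\Fq}(A_1\cap A_2)\le(\ell-1)k$. Applying $\psi$,
\[
\dim_{\Fq}\bigl(\psi(A_1)\cap\psi(A_2)\bigr)=\dim_{\Fq}\psi(A_1\cap A_2)\le(\ell-1)k.
\]
So $\mS_{\ell,U}$ is an $(\ell,(\ell-1)k)$-intersecting set with respect to $U$.

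\emph{Cardinality.} Since $\psi$ is injective on vectors, it is injective on subspaces. Therefore $A\mapsto\psi(A)$ is a bijection $\Lambda_{\ell,\cU}\to\mS_{\ell,U}$, which gives $|\mS_{\ell,U}|=|\Lambda_{\ell,\cU}|$.

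There is no real obstacle here. The only point needing care is the identity $\psi(X\cap Y)=\psi(X)\cap\psi(Y)$, which holds because $\psi$ is a bijection. The other point is that the intersection of distinct $\F_{q^k}$-subspaces is again $\F_{q^k}$-linear, which is what forces the drop to $(\ell-1)k$ rather than merely $\ell k-1$.
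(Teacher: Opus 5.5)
Your proof is correct and follows the same route as the paper. You use field reduction for the dimension $\ell k$ and for membership in $\Omega_{U,\ell}$, the drop $\dim_{\F_{q^k}}(A_1\cap A_2)\le \ell-1$ for the $(\ell-1)k$ bound, and injectivity of $\psi$ for the cardinality; you additionally spell out the identity $\psi(A)\cap U=\psi(A\cap\cU)$, which the paper leaves implicit.
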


\begin{proof}
For every $A\in\Lambda_{\ell,\cU}$, the field-reduced space $\psi(A)$ has dimension
$\ell k$ over $\Fq$ and belongs to the Schubert variety $\Omega_{U,\ell}$. If
$A,B\in\Lambda_{\ell,\cU}$ are distinct, then
$\dim_{\F_{q^k}}(A\cap B)\leq \ell-1$, and hence
$$
\dim_{\Fq}(\psi(A)\cap\psi(B))
=
k\cdot\dim_{\F_{q^k}}(A\cap B)
\leq (\ell-1)k.
$$
Thus, $\mS_{\ell,U}$ has the claimed intersection property. Finally, field reduction is injective on $\F_{q^k}$-subspaces, so $|\mS_{\ell,U}|=|\Lambda_{\ell,\cU}|$.
\end{proof}

We now specialize to $h$-scattered subspaces. In this case, the size of the construction can be
computed exactly. The key point is the following.

\begin{lemma}\label{lem:h_subspace_spans_h}
Let $\cU$ be an $h$-scattered $\Fq$-subspace of $\F_{q^k}^r$, and let $X\subseteq \cU$ be such
that $\dim_{\Fq}(X)=h$. Then
$\dim_{\F_{q^k}}(\langle X\rangle_{\F_{q^k}})=h$.
\end{lemma}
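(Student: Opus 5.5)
The plan is to prove the statement by contradiction, combining the trivial upper bound on $\dim_{\F_{q^k}}\langle X\rangle_{\F_{q^k}}$ with the $h$-scattered property applied to a suitable $h$-dimensional $\F_{q^k}$-subspace.

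First, set $W:=\langle X\rangle_{\F_{q^k}}$. An $\Fq$-basis $x_1,\dots,x_h$ of $X$ spans $W$ over $\F_{q^k}$, so $\dim_{\F_{q^k}}(W)\le h$. It therefore remains to exclude $\dim_{\F_{q^k}}(W)=s<h$.

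Assume $s<h$. We enlarge $W$ to an $h$-dimensional $\F_{q^k}$-subspace $\cW\supseteq W$ of $\F_{q^k}^r$, which is possible since $h<r$. Because $X\subseteq W\subseteq\cW$ and $X\subseteq\cU$, we get $X\subseteq\cU\cap\cW$, hence $\dim_{\Fq}(\cU\cap\cW)\ge h$. This alone is not a contradiction, since $h$-scattered only gives the bound $\le h$.

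To get a strict violation, we pick the extension so that $\cW$ picks up an extra vector of $\cU$. The spanning condition $\langle\cU\rangle_{\F_{q^k}}=\F_{q^k}^r$ gives $\cU\not\subseteq W$, because $s<h<r$. So we can choose $y\in\cU\setminus W$. Then $W':=W+\langle y\rangle_{\F_{q^k}}$ has $\F_{q^k}$-dimension $s+1\le h<r$, and we extend $W'$ to an $h$-dimensional $\F_{q^k}$-subspace $\cW$. Now $\cU\cap\cW\supseteq X+\langle y\rangle_{\Fq}$. Since $y\notin W\supseteq X$, this sum has $\Fq$-dimension $h+1$. This contradicts the $(h,h)$-evasiveness of $\cU$, so $\dim_{\F_{q^k}}(W)=h$.

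The main obstacle is exactly this step: producing an element of $\cU$ outside $W$. It depends on the spanning condition included in the definition of evasive subspaces, together with $s<r$. The rest is routine dimension bookkeeping.
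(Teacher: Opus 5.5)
Your proof is correct and follows essentially the same route as the paper's proof. Both assume $\dim_{\F_{q^k}}\langle X\rangle_{\F_{q^k}}<h$ and use $\langle\cU\rangle_{\F_{q^k}}=\F_{q^k}^r$ to pick a vector of $\cU$ outside this span. Extending to an $h$-dimensional $\F_{q^k}$-subspace then gives an $\Fq$-subspace of dimension $h+1$ inside its intersection with $\cU$, contradicting $h$-scatteredness.
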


\begin{proof}
Suppose by contradiction that $\dim_{\F_{q^k}}(\langle X\rangle_{\F_{q^k}})\leq h-1$, and set
$T:=\langle X\rangle_{\F_{q^k}}$. Since $\langle \cU\rangle_{\F_{q^k}}=\F_{q^k}^r$ and
$\dim_{\F_{q^k}}(T)<r$, there exists $v\in\cU\setminus T$.

The space $T+\langle v\rangle_{\F_{q^k}}$ has $\F_{q^k}$-dimension at most $h$. If its
dimension is smaller than $h$, enlarge it to an $h$-dimensional $\F_{q^k}$-subspace $H$ of
$\F_{q^k}^r$; otherwise, set $H:=T+\langle v\rangle_{\F_{q^k}}$. Then $H$ contains both $X$
and $v$. Since $v\notin T$ and $X\subseteq T$, we have
$\dim_{\Fq}(X+\langle v\rangle_{\Fq})=h+1$. Moreover,
$X+\langle v\rangle_{\Fq}\subseteq \cU\cap H$, contradicting the fact that $\cU$ is
$h$-scattered.
\end{proof}

\begin{theorem}\label{thm:h_scattered_size}
Let $\cU$ be an $h$-scattered $\Fq$-subspace of $\F_{q^k}^r$ of dimension $u$ and let $U$ be its field-reduced space.
Then $\mS_{h,U}$ is an $(h,(h-1)k)$-intersecting set with respect to $U$ in $\Gr_q(hk,rk)$, and
$$
|\mS_{h,U}|=\qbin{u}{h}{q}.
$$
\end{theorem}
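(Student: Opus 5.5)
The intersecting property is already given by \cref{prop:field_reduction_general} applied with $\ell=h$: $\mS_{h,U}$ is an $(h,(h-1)k)$-intersecting set in $\Gr_q(hk,rk)$ with $|\mS_{h,U}|=|\Lambda_{h,\cU}|$. So it remains to show $|\Lambda_{h,\cU}|=\qbin{u}{h}{q}$. I will do this with an explicit bijection between $\Lambda_{h,\cU}$ and the set of $h$-dimensional $\Fq$-subspaces of $\cU$, whose cardinality is $\qbin{u}{h}{q}$.

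Define $\Theta:\{X\subseteq_{\Fq}\cU:\dim_{\Fq}X=h\}\to\Lambda_{h,\cU}$ by $\Theta(X)=\langle X\rangle_{\F_{q^k}}$.

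\emph{Well-definedness.} By \cref{lem:h_subspace_spans_h}, $\langle X\rangle_{\F_{q^k}}$ has $\F_{q^k}$-dimension $h$. It contains $X$, so $\dim_{\Fq}(\langle X\rangle_{\F_{q^k}}\cap\cU)\ge h$. Since $\cU$ is $h$-scattered, this dimension is at most $h$, hence equal to $h$, and $\Theta(X)\in\Lambda_{h,\cU}$.

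\emph{Injectivity.} If $\Theta(X)=\Theta(Y)=A$, then $X$ and $Y$ are both contained in $A\cap\cU$. This intersection has $\Fq$-dimension exactly $h$, and $X,Y$ are $h$-dimensional, so $X=A\cap\cU=Y$.

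\emph{Surjectivity.} Take $A\in\Lambda_{h,\cU}$ and set $X:=A\cap\cU$, an $h$-dimensional $\Fq$-subspace of $\cU$. Then $\langle X\rangle_{\F_{q^k}}\subseteq A$. By \cref{lem:h_subspace_spans_h} both spaces have $\F_{q^k}$-dimension $h$, so they coincide and $\Theta(X)=A$.

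Hence $|\mS_{h,U}|=|\Lambda_{h,\cU}|=\qbin{u}{h}{q}$. The argument has no real obstacle beyond \cref{lem:h_subspace_spans_h}, which is already proved. The one point to handle carefully is invoking the scattered hypothesis in the correct direction, to force $A\cap\cU$ to be exactly $h$-dimensional; this is what makes both injectivity and surjectivity work. As a consistency check, this size attains the bound of \cref{prop:upper_bound} exactly when $(h-1)k=h-1$, i.e. when $h=1$. This recovers the case treated in \cite{alfarano2024schubert}.
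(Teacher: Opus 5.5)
Your proof is correct and is essentially the paper's argument. The paper shows that $A\mapsto A\cap\cU$ is a bijection from $\Lambda_{h,\cU}$ onto the $h$-dimensional $\Fq$-subspaces of $\cU$, using \cref{lem:h_subspace_spans_h} and $h$-scatteredness in the same way, while you verify bijectivity of the inverse map $X\mapsto\langle X\rangle_{\F_{q^k}}$. Your closing side remark is slightly off, since $(h-1)k=h-1$ also holds for every $h$ in the degenerate case $k=1$, but this does not affect the proof.
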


\begin{proof}
The intersection property follows from \cref{prop:field_reduction_general}. It remains to
compute the cardinality. We show that
$A\mapsto A\cap\cU$ gives a bijection from $\Lambda_{h,\cU}$ to the family of all $h$-dimensional $\bF_q$-subspaces of $\cU$.

If $A\in\Lambda_{h,\cU}$, then $A\cap\cU$ is an $h$-dimensional $\Fq$-subspace of $\cU$.
Conversely, let $X\subseteq \cU$ be an $h$-dimensional $\Fq$-subspace.
By \cref{lem:h_subspace_spans_h}, the space $A_X:=\langle X\rangle_{\F_{q^k}}$ has $\F_{q^k}$-dimension $h$.
Since $X\subseteq A_X\cap\cU$ and $\cU$ is $h$-scattered, we have $A_X\cap\cU=X$, and hence $A_X\in\Lambda_{h,\cU}$.
Finally, if $A_X\in\Lambda_{h,\cU}$ and $X=A_X\cap\cU$, then $\langle X\rangle_{\F_{q^k}}\subseteq A_X$.
By \cref{lem:h_subspace_spans_h}, both spaces have $\F_{q^k}$-dimension $h$, so $A_X=\langle X\rangle_{\F_{q^k}}$.
Therefore, the map is bijective and $|\Lambda_{h,\cU}|=\qbin{u}{h}{q}$.

Since $|\mS_{h,U}|=|\Lambda_{h,\cU}|$, the result follows.
\end{proof}

\begin{remark}
For $h=1$, the theorem gives a $(1,0)$-intersecting set in $\Gr_q(k,rk)$ of size
$\qbin{u}{1}{q}$. This recovers the construction from scattered $\Fq$-subspaces given in
\cite{alfarano2024schubert}. Indeed, in this case, the codewords are the field reductions of
the $1$-dimensional $\F_{q^k}$-subspaces meeting $\cU$ in dimension $1$, and the scatteredness
of $\cU$ guarantees that any two distinct codewords intersect trivially.
\end{remark}

\begin{proposition}\label{prop:field_reduction_upper_bound}
Let $\mS\subseteq \Omega_{U,h}\cap \Gr_q(hk,rk)$ be an $(h,(h-1)k)$-intersecting set with respect to $U$. Then
$$
|\mS|
\leq
\qbin{u}{h}{q}
\frac{
\qbin{rk-h}{(h-1)(k-1)}{q}
}{
\qbin{hk-h}{(h-1)(k-1)}{q}
}.
$$
\end{proposition}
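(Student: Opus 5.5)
The plan is a two-level double counting: first fibre the code over the $h$-dimensional subspaces of $U$, as in the proof of \cref{prop:upper_bound}, and then bound each fibre with a packing (Johnson-type) bound in a quotient space.

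\textbf{Step 1: fibring over $h$-subspaces of $U$.} For every $S\in\mS$ we have $\dim(S\cap U)\ge h$. So we fix a choice of an $h$-dimensional subspace $T_S\subseteq S\cap U$. This gives a map $S\mapsto T_S$ from $\mS$ to $\Gr_q(h,u)$, and therefore
$$
|\mS|=\sum_{T\in \Gr_q(h,u)} |\mS_T|, \qquad \mS_T:=\{S\in\mS : T_S=T\}.
$$
Since $\Gr_q(h,u)$ has $\qbin{u}{h}{q}$ elements, it suffices to show that every fibre satisfies
$$
|\mS_T|\le \qbin{rk-h}{(h-1)(k-1)}{q}\Big/\qbin{hk-h}{(h-1)(k-1)}{q}.
$$
Unlike in \cref{prop:upper_bound}, the map $S\mapsto T_S$ need not be injective, because $(h-1)k\ge h$ in general. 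This is exactly why the fibres have to be bounded separately.

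\textbf{Step 2: passing to the quotient.} Fix $T$ and work in the quotient $\Fq^{rk}/T$, which has dimension $rk-h$. Every $S\in\mS_T$ contains $T$, so $\bar S:=S/T$ is a subspace of dimension $hk-h$. For distinct $S_1,S_2\in\mS_T$, both contain $T$, so $S_1\cap S_2\supseteq T$ and $\bar S_1\cap\bar S_2=(S_1\cap S_2)/T$. Hence
$$
\dim(\bar S_1\cap \bar S_2)\le (h-1)k-h=(h-1)(k-1)-1.
$$
Also, $S\mapsto \bar S$ is injective on $\mS_T$, since every $S\in\mS_T$ contains $T$.

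\textbf{Step 3: packing bound in the quotient.} Set $m:=(h-1)(k-1)$. By the inequality in Step 2, no $m$-dimensional subspace of $\Fq^{rk}/T$ can lie in two distinct $\bar S$'s. Each $\bar S$ contains exactly $\qbin{hk-h}{m}{q}$ subspaces of dimension $m$, and the ambient quotient contains $\qbin{rk-h}{m}{q}$ of them. Double counting pairs ($m$-subspace, $\bar S$ containing it) gives
$$
|\mS_T|\cdot \qbin{hk-h}{m}{q}\le \qbin{rk-h}{m}{q}.
$$
Summing this over the $\qbin{u}{h}{q}$ choices of $T$ yields the claimed bound.

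The only real obstacle is the arithmetic in Step 2: one must check that $(h-1)k-h$ is exactly $m-1$, so that $m$ is the right dimension at which the packing argument applies. It is also worth noting the edge cases. If $h=1$, then $m=0$ and each fibre has size at most $1$, recovering the $(1,0)$ bound $\qbin{u}{1}{q}$. If $hk-h<m$, the inner bound is vacuous, but this cannot happen since $h\ge 1$ gives $hk-h\ge (h-1)(k-1)$.
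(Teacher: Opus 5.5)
Your proof is correct and follows essentially the same route as the paper: you fibre the code over the $h$-dimensional subspaces of $U$, pass to the quotient by the fixed $h$-space, and apply the packing count with $m=(h-1)(k-1)$. The only cosmetic difference is that you choose one $T_S$ per codeword to get a genuine partition, whereas the paper sums over all $h$-subspaces $X\subseteq U$ contained in each codeword via a union bound; both yield the same estimate.
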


\begin{proof}
For every $h$-dimensional subspace $X\subseteq U$, let $\mS_X:=\{S\in\mS:X\subseteq S\}$.
Since every $S\in\mS$ contains at least one $h$-dimensional subspace of $U$, we have
$$
|\mS|\leq \sum_{X\subseteq U,\ \dim(X)=h}|\mS_X|.
$$
Fix such an $X$. For every $S\in\mS_X$, the quotient $S/X$ has dimension $hk-h$ inside
$\Fq^{rk}/X$. If $S_1,S_2\in\mS_X$ are distinct, then
$$
\dim((S_1/X)\cap(S_2/X))
=
\dim(S_1\cap S_2)-h
\leq
(h-1)k-h.
$$
Therefore, no two distinct quotients $S/X$ can contain the same subspace of dimension
$(h-1)k-h+1=(h-1)(k-1)$. Counting such subspaces in $\Fq^{rk}/X$, we obtain
$$
|\mS_X|
\qbin{hk-h}{(h-1)(k-1)}{q}
\leq
\qbin{rk-h}{(h-1)(k-1)}{q}.
$$
Thus, we have
$$
|\mS_X|
\leq
\frac{
\qbin{rk-h}{(h-1)(k-1)}{q}
}{
\qbin{hk-h}{(h-1)(k-1)}{q}
}.
$$
Since there are $\qbin{u}{h}{q}$ choices for $X\subseteq U$, the result follows.
\end{proof}

\begin{remark}
For $h=1$, the extra factor in \cref{prop:field_reduction_upper_bound} is equal to $1$, and the bound becomes $|\mS|\leq \qbin{u}{1}{q}$, which matches the size of the code obtained from $1$-scattered subspaces.
For $h>1$, the bound is generally larger than $\qbin{u}{h}{q}$, and therefore it does not prove optimality of the code $\mS_{h,U}$ obtained from an $h$-scattered subspace.
It does, however, give an explicit upper bound for all Schubert subspace codes with parameters $\ell=h$, $t=(h-1)k$, and codeword dimension $hk$.
\end{remark}

\section{Conclusion and open problems}\label{sec:conclusions}

We constructed new families of Schubert subspace codes attaining the bound in \cref{prop:upper_bound} in several extremal cases.
The direct-sum construction relates optimality to partial spreads and colorings of powers of Grassmann graphs, while the field-reduction construction connects the problem with evasive and scattered subspaces.

Several questions remain open.

\begin{enumerate}
    \item Close the gap between the sufficient conditions from Section~\ref{sec:sufficient_conditions} and the necessary conditions given on the chromatic and clique numbers in Section~\ref{sec:necessary_conditions}.
    
    \item Study whether there exist optimal constructions that are not of direct-sum type, and whether such constructions do not rely on partial spreads.
    
    \item For the field-reduction construction with $h>1$, determine whether the codes obtained from
    $h$-scattered subspaces are optimal among all Schubert subspace codes with the same parameters.
    
    \item Investigate sharp upper bounds for Schubert subspace codes in the regime where the counting bound of \cref{prop:upper_bound} does not apply.
\end{enumerate}

\bigskip
	
\bibliographystyle{abbrv}
\bibliography{bibliography.bib}

@article{alfarano2024schubert,
  title={Schubert subspace codes},
  author={Alfarano, Gianira N and Rosenthal, Joachim and Toesca, Beatrice},
  journal={J. Algebra Appl.},
  volume={24},
  number={13n14},
  pages={2541006},
  year={2025},
  publisher={World Scientific}
}

@article{csajbok2021generalising,
  title={Generalising the scattered property of subspaces},
  author={Csajb{\'o}k, Bence and Marino, Giuseppe and Polverino, Olga and Zullo, Ferdinando},
  journal={Combinatorica},
  volume={41},
  number={2},
  pages={237--262},
  year={2021},
  publisher={Springer}
}

@article{dhaeseleer2025chromatic,
  title={On the Chromatic Number of {G}rassmann Graphs},
  author={D'haeseleer, Jozefien and Taranchuk, Vladislav},
  journal={Linear Algebra Its Appl.},
  year={2026},
  publisher={Elsevier}
}

@article{dhaeseleer2026chromatic,
  title={Chromatic Number of {G}rassmann Graphs and {MRD} codes},
  author={D'haeseleer, Jozefien and Pavese, Francesco and Santonastaso, Paolo and Taranchuk, Vladislav},
  journal={arXiv preprint arXiv:2602.10777},
  year={2026}
}

@INPROCEEDINGS{ma08p,
        AUTHOR = {Manganiello, F. and Gorla, E. and Rosenthal, J.},
        TITLE  = {Spread Codes and Spread Decoding in Network Coding},
        ADDRESS            = {Toronto, Canada},
        BOOKTITLE          = {IEEE Int. Symp. Inf. Theory - Proc. 2008},
        PAGES              = {851--855},
        YEAR               = 2008,
        doi={10.1109/ISIT.2008.4595113},
}

@article{go12,
  author    = {E. Gorla and F. Manganiello and J. Rosenthal},
  title     = {An Algebraic Approach for Decoding Spread Codes},
  journal   = {Adv. in Math. of Commun.}, 
  volume =	 6,
  number =	 4,
  year      = 2012,
  pages =	 {443--466},
  ee        = {http://arxiv.org/abs/1107.5523},
  bibsource = {DBLP, http://dblp.uni-trier.de}
}

@article{beutelspacher1975partial,
  title={Partial spreads in finite projective spaces and partial designs},
  author={Beutelspacher, Albrecht},
  journal={Math. Z.},
  volume={145},
  number={3},
  pages={211--229},
  year={1975},
  publisher={Springer}
}

@book{brouwer1989distance,
  title={Distance-regular graphs},
  author={Brouwer, Andries E and Cohen, Arjeh M. and Neumaier, A.},
  year={1989},
  publisher={Springer}
}

@article{brooks1941colouring,
  title={On colouring the nodes of a network},
  author={Brooks, Rowland Leonard},
  journal={Math. Proc. Camb. Philos. Soc.},
  volume={37},
  number={2},
  pages={194--197},
  year={1941},
  organization={Cambridge University Press}
}

@article{koetter2008coding,
  title={Coding for errors and erasures in random network coding},
  author={Koetter, Ralf and Kschischang, Frank R},
  journal={IEEE Trans. Inf. Theory},
  volume={54},
  number={8},
  pages={3579--3591},
  year={2008},
  publisher={IEEE}
}

@article{kurz2021constructions,
  title={Constructions and bounds for subspace codes},
  author={Kurz, Sascha},
  journal={arXiv preprint arXiv:2112.11766},
  year={2021}
}

@article{horlemann2018constructions,
  title={Constructions of constant dimension codes},
  author={Horlemann-Trautmann, Anna-Lena and Rosenthal, Joachim},
  journal={Network Coding and Subspace Designs},
  pages={25--42},
  year={2018},
  publisher={Springer}
}

@article{etzion2009error,
  title={Error-correcting codes in projective spaces via rank-metric codes and {F}errers diagrams},
  author={Etzion, Tuvi and Silberstein, Natalia},
  journal={IEEE Trans. Inf. Theory},
  volume={55},
  number={7},
  pages={2909--2919},
  year={2009},
  publisher={IEEE}
}

@article{segre1964teoria,
  title={Teoria di {G}alois, fibrazioni proiettive e geometrie non desarguesiane},
  author={Segre, Beniamino},
  journal={Ann. Mat. Pura Appl.},
  volume={64},
  number={1},
  pages={1--76},
  year={1964},
  publisher={Springer}
}

@article{heinlein2017coset,
  author  = {Daniel Heinlein and Sascha Kurz},
  title   = {Coset Construction for Subspace Codes},
  journal = {IEEE Trans. Inf. Theory},
  volume  = {63},
  number  = {12},
  pages   = {7651--7660},
  year    = {2017},
  doi     = {10.1109/TIT.2017.2753822}
}

@article{heinlein2016tables,
  title={Tables of subspace codes},
  author={Heinlein, Daniel and Kiermaier, Michael and Kurz, Sascha and Wassermann, Alfred},
  journal={arXiv preprint arXiv:1601.02864},
  year={2016}
}

@book{fulton1997young,
	title={Young tableaux: with applications to representation theory and geometry},
	author={Fulton, William},
	year={1997},
        series={London Mathematical Society Student Texts},
        number={35},
	publisher={Cambridge University Press}
}

@book{hodge1947methods,
	title={Methods of algebraic geometry},
	author={Hodge, William Vallance Douglas and Pedoe, Daniel},
	volume={2},
	year={1947},
	publisher={Cambridge University Press}
}

@inproceedings{ghorpade2000higher,
  title={Higher weights of {G}rassmann codes},
  author={Ghorpade, Sudhir R and Lachaud, Gilles},
  booktitle={Coding Theory, Cryptography and Related Areas: Proceedings of an International Conference on Coding Theory, Cryptography and Related Areas, held in Guanajuato, Mexico, in April 1998},
  pages={122--131},
  year={2000},
  organization={Springer}
}

@article{bartoli2021evasive,
  title={Evasive subspaces},
  author={Bartoli, Daniele and Csajb{\'o}k, Bence and Marino, Giuseppe and Trombetti, Rocco},
  journal={J. Comb. Des.},
  volume={29},
  number={8},
  pages={533--551},
  year={2021},
  publisher={Wiley Online Library}
}

@article{blokhuis2000scattered,
  title={Scattered spaces with respect to a spread in {PG}$(n,q)$},
  author={Blokhuis, Aart and Lavrauw, Michel},
  journal={Geom. Dedicata},
  volume={81},
  number={1},
  pages={231--243},
  year={2000},
  publisher={Springer}
}

\end{document}